\newtheorem*{theorem*}{Main Theorem}
\newtheorem{theorem}{Theorem}
\newtheorem*{Lemma}{Reparametrization Lemma}
\newtheorem{lemma}{Lemma}
\newtheorem{prop}[theorem]{Proposition}
\newtheorem{coro}[theorem]{Corollary}
\newtheorem{rem}[theorem]{Remark}
\theoremstyle{definition}
\newtheorem{definition}{Definition}
\newtheorem*{ques}{Question}
\newcommand{\Ima}{\mathop{\mathrm{Im}}}
\newcommand{\Jac}{\mathop{\mathrm{Jac}}}
\newcommand{\Leb}{\mathop{\mathrm{Leb}}}
\theoremstyle{remark}
\numberwithin{equation}{section}
\begin{document}

\title{Entropy of physical measures for  $C^\infty$ smooth systems}

%    Information for first author
\author{David Burguet}
%    Address of record for the research reported here
\address{Sorbonne Universite, LPSM, 75005 Paris, France}
  \email{david.burguet@upmc.fr}  
\subjclass[2010]{Primary 37C40, 37D25}

\date{June 2018}

%\dedicatory{This paper is dedicated to our advisors.}

\keywords{Entropy, Lyapunov exponent, SRB measure, physical measure}
\begin{large}
\begin{abstract}
For a $C^\infty$ map on a compact manifold we prove that for a Lebesgue randomly   picked point $x$ there is an empirical measure from  $x$ with entropy larger than or equal to the sum of positive Lyapunov exponents at $x$. This contrasts with the well-known Ruelle inequality.  As a consequence we give some refinement of Tsujii's work \cite{Tsu} relating physical and Sinai-Ruelle-Bowen measures. 
\end{abstract}

\maketitle

\section*{Introduction}
Entropy is a master invariant in dynamical systems, which estimates the dynamical complexity by counting the separated orbits. 
For a differentiable system other dynamical quantities of high interest are  the Lyapunov exponents. They are given by the exponential growth rate of the derivative. Heuristically the first  derivative controls the separation of points (as in the mean value inequality) so that the entropy is always less than or equal to the (sum of positive) Lyapunov exponents. This inequality, due to Ruelle \cite{Rue}, holds at any invariant measure. Moreover the case of equality characterizes the so-called Sinai-Ruelle-Bowen measures for $C^{1+\alpha}$ systems. 

 Here we use a slightly different framework. We do not consider entropy and Lyapunov exponent defined on  invariant measures but on points. For the entropy we let $h(x)$ be the supremum  entropy of the  empirical measures   at a given point $x$.  We may also define a pointwise sum of positive Lyapunov exponents, denoted by $\Sigma\chi^+(x)$, by considering the limsup in the exponential growth  of the derivative at $x$ (see Section 1 for the precise definitions). We then aim to compare $h(x)$ and  $\Sigma\chi^+(x)$ "physically", i.e. for Lebesgue almost every point $x$.  For a $C^\infty$ system we prove quite surprisingly the entropy is physically bounded from below by the sum of positive Lyapunov exponents, i.e.  \[h\geq \Sigma\chi^+\text{ almost surely.}\]

In \cite{Yom} Yomdin introduced tools of semi-algebraic geometry in order to control the local volume growth of $C^\infty$ smooth systems.  In particular it allows him to show that Shub's entropy conjecture holds true in this setting. 
Using a similar approach we manage to control locally not only the volume growth but also the distortion (see also \cite{Bur} and \cite{Burr}). The resulting Reparametrization Lemma of dynamical balls is the key argument in the proof of our Main Theorem. \\

The paper is organized as follows. In the first section we recall the notions of physical, physical-like and Sinai-Ruelle-Bowen measures. We also introduce the  \textit{strong Lyapunov exponents} which provide a new way to estimate the exponential growth of the derivative at a point. Our Main Theorem and its Corollaries are stated and discussed in Section 2. The last two sections are devoted to the proof. Finally we present a counter-example in finite smoothness in the appendix.

%% The correct journal style for \specialsection is all uppercase; a known bug
%% in amsart.cls prevents this, so input must be uppercase until it is fixed.
%\specialsection*{This is a Special Section Head}
%\specialsection*{THIS IS A SPECIAL SECTION HEAD}
%This is an example of a special section head%
%%%%%%%%%%%%%%%%%%%%%%%%%%%%%%%%%%%%%%%%%%%%%%%%%%%%%%%%%%%%%%%%%%%%%%%%

%%%%%%%%%%%%%%%%%%%%%%%%%%%%%%%%%%%%%%%%%%%%%%%%%%%%%%%%%%%%%%%%%%%%%%%%

\section{Background}

\subsection{Physical measures}
Let $(M,f)$ be a topological system, i.e. $f:M\rightarrow M$ is a continuous map on a compact metrizable space $M$. Fix a metric $\mathsf d$ on $M$. We let $\mathcal{M}(M)$ (resp. $\mathcal{M}(M,f)$) be the set of   Borel probability measures (resp. $f$-invariant). Endowed with the  weak-$*$ topology these sets are compact metrizable spaces. When $(\phi_n:M\rightarrow \mathbb{R})_{n\in \mathbb{N}}$ is a dense countable family of the set of real continuous functions  on $X$ for  the usual supremum norm then the following convex
 metric $\mathfrak{d}$ on $\mathcal{M}(M)$ is compatible with the weak-$*$ topology:
\[\forall \mu,\nu\in \mathcal{M}(M), \ \mathfrak{d}(\mu, \nu):=\sum_{n}\frac{\left| \int \phi_n \,d\nu-\int \phi_n \,d\mu\right|}{2^n(1+\sup_x|\phi_n(x)|)}.\] We will also consider the set  $\mathcal{KM}(M)$ of all nonempty closed subsets of $\mathcal{M}(M)$ with the associated Hausdorff metric $\mathfrak{d}^H$.

  The basin $\mathcal{B}_\mu$ of an invariant measure $\mu\in \mathcal{M}(M,f)$ is the set of point $x\in M$ whose empirical measures $\mu_n^x:=\frac{1}{n}\sum_{0\leq k<n}\delta_{f^kx}$ is converging to $\mu$, when $n$ goes to infinity. According to Birkhof ergodic theorem the set $\mathcal{B}_\mu$ has full $\mu$-measure for an ergodic measure $\mu$. In the following we will always consider a $C^\infty$ smooth compact Riemannian manifold $(M,\|\cdot\|)$ and its induced Riemannian distance $\mathsf d$. The (normalized) volume form inherited from the Riemannian structure will be called the Lebesgue measure and is denoted  by $\Leb$. An invariant  measure is said \textbf{physical} when its basin has positive Lebesgue measure. From the works of Sinai, Ruelle and  Bowen  \cite{Sin,Rue,Bow} it is known that any $C^2$ Axiom A diffeomorphism admits finitely many ergodic physical measures such that the  union of their basin has full Lebesgue measure.

We recall now the concept of physical-like measures \cite{Cat,CH}. For  $x\in M$ we let $pw(x)\subset \mathcal{M}(M,f)$ be the accumulation points of the empirical measures $(\mu_n^x)_n$ at $x$. An invariant measure $\mu\in \mathcal{M}(M,f)$ is said \textbf{physical-like} when for any $\epsilon>0$ the set $\{x, \ \mathfrak{d}(\mu, pw(x))<\epsilon\}$ has positive Lebesgue measure (in particular the physical measures are physical-like).   The set $\mathcal{PL}=\mathcal{PL}(Leb)$ of physical-like measures is the smallest compact subset of measures containing $pw(x)$ for Lebesgue almost every point $x\in M$.  In other terms if one considers the closed valued map 
\begin{eqnarray*}
pw: & X\rightarrow &\mathcal{KM}(M),\\
&x\mapsto& pw(x)
\end{eqnarray*}  and its essential range $\overline{\Ima}_{Leb}(pw)$ then we have (see Appendix \ref{deuse})
\[\mathcal{PL}=\bigcup_{K\in \overline{\Ima}(pw)}K.\]
Instead of the Lebesgue measure we may consider any other Borel measure $m$ and define similarly $\mathcal{PL}(m)$ 
as the smallest compact subset of measures containing $pw(x)$ for $m$-almost every point $x\in M$. We let $PL(m)$ be the set with full $m$-measure given by points $x\in M$ with $pw(x)\in \overline{\Ima}_m(pw)$ (in particular $pw(x)\subset \mathcal{PL}(m)$.  When $m$ is  absolutely continuous with respect to another 
Borel measure $m'$ then $\mathcal{PL}(m)$ is a subset of $\mathcal{PL}(m')$. For a subset $E$ of $M$ with $\Leb(E)>0$ we let $\Leb_E$ be the probability measure induced on $E$. In particular we have $\mathcal{PL}(\Leb_E)\subset \mathcal{PL}(\Leb_F)$ for subsets $E\subset F$.

\subsection{Lyapunov exponents}
In this section we consider a  $C^1$ diffeomorphism $f$ of $M$.  We recall some background on Lyapunov exponents (see \cite{BP} for further details), which estimate the exponential growth  in $n$ of the derivative $df^n$ on the tangent bundle $TM$ of $M$.  
\subsubsection{Lyapunov exponents of a point}
The (forward) Lyapunov exponent of $(x,v)\in TM\setminus \{0\}$ is defined as follows 
\[\chi(f,x,v):=\limsup_{n\rightarrow +\infty}\frac{1}{n}\log \|d_xf^n(v)\|.\]
%For any positive integer $m$ we have $\chi(f^m,\cdot,\cdot)=m\chi(f,\cdot,\cdot)$.
The function $\chi(x,\cdot):=\chi(f,x,\cdot)$ admits only finitely many values $\chi_1(x)>...>\chi_{p(x)}(x)$ on 
$TM\setminus \{0\}$ and  generates a filtration $ 0\subsetneq V_{p(x)}(x) \subsetneq \cdots
\subsetneq V_{1}=T_xM$ with  $V_i(x)=\{ v\in TM, \ \chi(x,v)\leq \chi_i(x)\}$. The function $p$  as well the vector spaces $V_i(x)$, $i=1,...,p(x)$ depend Borel 
measurably on $x$. We let $\chi(x)$ be the maximal Lyapunov exponent at $x$ :
\[\chi(x):=\max_i\chi_i(x)=\max_{(x,v)\in TM\setminus \{0\}}\chi(x,v).\]

For any positive  integer $k$ less than or equal to the dimension $d$ of $M$ we may define similarly the  maximal Lyapunov exponent $\chi^k$ of the map $\Lambda^kdf$ induced by $df$ on the $k$-exterior bundle $\Lambda^kTM$. In particular we have $\chi^1=\chi$. Finally we put for all $x\in M$ :
\[\Sigma\chi^+(x)=\max\left(0,\chi^1(x),\cdots, \chi^d(x)\right).\]

A point is said \textbf{ (forward) Lyapunov regular } when $\chi^k(x)=\sum_{i=1}^k\chi_i(x)$ for all $1\leq k\leq d$. By Oseledets theorem \cite{Ose} the set of Lyapunov regular points has full $\mu$-measure for any invariant measure $\mu$. However we are here mostly interested in the typical dynamical behaviour with respect to the Lebesgue measure (which is a priori not  invariant). In particular it may happen that the set of Lyapunov regular points has not full Lebesgue measure ( see e.g. \cite{Ott} for the eight attractor). We will never assume  Lyapunov regularity in the present paper. \\

We let $\overline{\Sigma\chi^+}$ (resp. $\overline{\chi^k}$ for $k=1,\cdots, d$) be the essential supremum of $\Sigma\chi^+$  (resp. $\chi^k$) with respect to the Lebesgue measure, in particular 
$\overline{\Sigma\chi^+}=\max(0, \overline{\chi^1},\cdots, \overline{\chi^d})$. 
Based on Yomdin's theory and the volume growth estimates due to Newhouse, Koslowski \cite{Kos} showed an integral formula for the topological entropy of a $C^\infty$ smooth system. This equality may be written as follows:
\begin{equation*}h_{top}(f)=\lim_n\frac{1}{n}\log \int \max_k\|\Lambda^k d_xf^n \| \, d\Leb(x).
\end{equation*}

By Jensen's inequality we have for all integers $n$
\[\log \int  \max_k\|\Lambda^kd_xf^n\| \, d\Leb(x)\geq \int \log  \max_k\|\Lambda^kd_xf^n\|\, 
d\Leb(x).\]
According to  Borel-Cantelli Lemma, for all $\gamma>0$,  the set $\{x\in M, \  \max_k\|\Lambda^kd_xf^n\|\geq e^{n(\overline{\Sigma\chi^+}-\gamma)}\}$ has Lebesgue measure larger than $e^{-n\gamma}$ for infinitely many $n$. Therefore we conclude that 
\begin{equation}\label{ko} h_{top}(f)\geq  \limsup_n \frac{1}{n} \int \log\max_k\|\Lambda^kd_xf^n\|\, d\Leb(x)\geq \overline{\Sigma\chi^+}.
\end{equation}

\subsubsection{Lyapunov exponents of invariants measures}
For an invariant measure $\mu$ we let for $i=1,...,d$ 
\[\chi_i(\mu):=\int \chi_i(x)d\mu(x)\] and 
\[\chi_i^{+}(\mu):=\int \max\left(0,\chi_i(x)\right) d\mu(x).\]

For a sequence of real numbers $(a_n)_n$ we let $\lim^\searrow_na_n$ the limit in $n$ of the sequence $(a_n)_n$ when the sequence is converging to $\inf_na_n$.  
The maximal Lyapunov exponent $\chi(\mu)=\max_i\chi_i(\mu)$ and its positive part $\chi^+(\mu)=\max_i\chi_i^{+}(\mu)$  satisfy 
\[\chi(\mu)=\lim^\searrow_n\frac{1}{n}\int \log\|d_xf^n\| \, d\mu(x)\] and 
\[\chi^{+}(\mu)=\lim^\searrow_n\frac{1}{n}\int \log^+\|d_xf^n\| \, d\mu(x).\]
Similarly the sum $\Sigma\chi^+(\mu)=\sum_i\chi_i^+(\mu)$ of all the positive Lyapunov exponents of $\mu$ satisfies

\[\Sigma\chi^{+}(\mu)=\lim^\searrow_n\frac{1}{n}\int \log^+\max_k\|\Lambda^kd_xf^n\| \, d\mu(x).\]
When $\mu$ is ergodic, we get from the subadditive ergodic theorem for all $i$ and $\mu$-almost every $x$ $$\chi_i(x)=\chi_i(\mu),$$ $$\chi(\mu)=\chi(x)=\lim_n\frac{1}{n}\log\|d_xf^n\|,$$ $$\Sigma\chi^+(\mu)=\Sigma\chi^{+}(x)=\lim_n\frac{1}{n}\log^+\max_k\|\Lambda^kd_xf^n\|.$$  
The functions $\mu\mapsto \chi(\mu),\chi^+(\mu), \Sigma \chi^+(\mu)$ define upper semicontinuous fonctions on $\mathcal{M}(M,f)$ (see e.g.  \cite{Bon}). 

We recall that for a $C^1$ diffeomorphism Ruelle's inequality \cite{Rue} gives the following upperbound of the metric entropy $h(\mu)$ of an invariant measure $\mu$
\[h(\mu)\leq \Sigma \chi^+(\mu).\]

An ergodic measure is said\textbf{ hyperbolic} when any of its Lyapunov exponent is  nonzero.

\subsubsection{Strong Lyapunov exponents}
In this paragraph we introduce a new kind of pointwise Lyapunov exponents which is related with the Lyapunov exponents of the empirical measures. We define the \textbf{strong positive maximal Lyapunov exponent}  as follows. First we let for all $p\geq 1$ and  for all $x\in M$
\[\lambda_p(x):=\limsup_{n}\frac{1}{n}\sum_{l=0}^{n}\log ^+\|d_{f^{l}x}f^p\|.\]
Clearly we have $\frac{1}{p}\lambda_p(x)\geq \chi^+(x)$  by submultiplicativity of the norm. Moreover the sequence $(\lambda_p(x))_p$ is  a subadditive sequence.  Then we let for all $x\in M$
 \[\lambda(x):=\lim^\searrow_p \frac{1}{p}\lambda_p(x)\geq \chi^+(x).\]
 
\begin{lemma}Let $(M,f)$  be a  $C^1$ dynamical system. Then we have  for all $x\in M$
\[\sup_{\mu\in pw(x)}\chi^+(\mu)=\lambda(x).\]
\end{lemma}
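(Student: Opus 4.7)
The plan is to establish the two inequalities separately.

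For the upper bound $\sup_{\mu\in pw(x)}\chi^+(\mu)\leq\lambda(x)$, set $\phi_p(y):=\log^+\|d_yf^p\|$, which is continuous and bounded on the compact manifold $M$. If $\mu\in pw(x)$ is the weak-$*$ limit of $(\mu_{n_k}^x)_k$, then $\int\phi_p\,d\mu=\lim_k\int\phi_p\,d\mu_{n_k}^x$, and since $\int\phi_p\,d\mu_n^x$ differs from $\frac{1}{n}\sum_{l=0}^n\phi_p(f^lx)$ by a boundary term of order $1/n$, this limit is bounded by $\lambda_p(x)$. Dividing by $p$ and invoking the decreasing-limit formula $\chi^+(\mu)=\lim^\searrow_p\frac{1}{p}\int\phi_p\,d\mu$ yields $\chi^+(\mu)\leq\lambda(x)$.

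For the reverse inequality, I will build a single $\mu^*\in pw(x)$ with $\chi^+(\mu^*)\geq\lambda(x)$. For each $p$ the functional $\mu\mapsto\int\phi_p\,d\mu$ is weak-$*$ continuous on the compact set $pw(x)$, so it attains its maximum at some $\mu_p\in pw(x)$; since the accumulation points of the real sequence $(\int\phi_p\,d\mu_n^x)_n$ are exactly the values $\{\int\phi_p\,d\mu:\mu\in pw(x)\}$, this maximum equals $\lambda_p(x)$. The sequence $(\mu_p)_p$ lives in the compact set $pw(x)$, so a subsequence $\mu_{p_k}\to\mu^*\in pw(x)$ exists.

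The heart of the argument is a subadditive interpolation. Fix $q\geq 1$ and write $p_k=m_kq+r_k$ with $0\leq r_k<q$. Submultiplicativity of $\|df^n\|$ yields $\phi_{p_k}(y)\leq\sum_{j=0}^{m_k-1}\phi_q(f^{jq}y)+\phi_{r_k}(f^{m_kq}y)$; integrating against the $f$-invariant measure $\mu_{p_k}$ gives
\[\lambda_{p_k}(x)=\int\phi_{p_k}\,d\mu_{p_k}\leq m_k\int\phi_q\,d\mu_{p_k}+qC,\]
where $C:=\log^+\sup_{y\in M}\|d_yf\|<\infty$. Dividing by $qm_k=p_k-r_k$ and using $\lambda_{p_k}(x)/p_k\to\lambda(x)$ yields $\frac{1}{q}\int\phi_q\,d\mu_{p_k}\geq\lambda(x)-o(1)$. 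Weak-$*$ continuity of $\phi_q$ transfers the estimate to the limit, so $\frac{1}{q}\int\phi_q\,d\mu^*\geq\lambda(x)$ for every $q$, and taking the infimum gives $\chi^+(\mu^*)\geq\lambda(x)$.

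The main obstacle is that although $\int\phi_p\,d\mu_p=\lambda_p(x)$, the quantity $\chi^+(\mu_p)=\inf_q\frac{1}{q}\int\phi_q\,d\mu_p$ can be strictly smaller than $\lambda_p(x)/p$, so the lower bound cannot be extracted from any single $\mu_p$. The subadditive inequality above is precisely what converts the upper bound $\lambda_{p_k}(x)$ on the large summand $\int\phi_{p_k}\,d\mu_{p_k}$ into a lower bound on every $\int\phi_q\,d\mu_{p_k}$, so that the limit $\mu^*$ simultaneously controls all of $\frac{1}{q}\int\phi_q\,d\mu^*$.
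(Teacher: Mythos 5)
Your proof is correct, and the lower bound is obtained by a genuinely different route than the paper's. The paper fixes a subsequence $p_k \mid p_{k+1}$, along which both $\frac{1}{p_k}\int\log^+\|d_yf^{p_k}\|\,d\mu$ and $\frac{\lambda_{p_k}(x)}{p_k}$ are nonincreasing, and then invokes a $\sup$--$\inf$ exchange theorem (Proposition 2.4 in [BD]) to conclude
$\sup_{\mu\in pw(x)}\inf_k\frac{1}{p_k}\int\phi_{p_k}d\mu=\inf_k\sup_{\mu\in pw(x)}\frac{1}{p_k}\int\phi_{p_k}d\mu\geq\inf_k\frac{\lambda_{p_k}(x)}{p_k}=\lambda(x)$, without ever exhibiting a single maximizing measure. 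You instead build an explicit $\mu^*\in pw(x)$ as a weak-$*$ limit of the maximizers $\mu_{p_k}$, and the key idea replacing the minimax lemma is the subadditive interpolation $\phi_{p_k}(y)\leq\sum_{j=0}^{m_k-1}\phi_q(f^{jq}y)+\phi_{r_k}(f^{m_kq}y)$ combined with the $f$-invariance of $\mu_{p_k}$, which converts the single piece of information $\int\phi_{p_k}d\mu_{p_k}=\lambda_{p_k}(x)$ into a uniform lower bound on $\frac{1}{q}\int\phi_q\,d\mu_{p_k}$ for every block length $q$. The payoff of your approach is that it is self-contained (no external minimax theorem needed) and it produces the maximizing measure constructively; the payoff of the paper's approach is brevity once the Boyle--Downarowicz lemma is taken as given, and the avoidance of any Euclidean-division bookkeeping.
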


\begin{proof}
Let $x\in M$. Let $\mu=\lim_k\mu_{n_k}^x\in pw(x)$ for an increasing sequence of integers $(n_k)_k$. 
For all $n$ and $p$ we have 
\[\int \log^+\|d_yf^p\|\, d\mu_n^x(y)=\frac{1}{n}\sum_{l=0}^{n}\log ^+\|d_{f^{l}x}f^p\|.\]
Taking the limit over $n=n_k$ when $k$ goes to infinity  we get 
\[\int \frac{\log^+\|d_yf^p\|}{p}\, d\mu(y)\leq \frac{\lambda_p(x)}{p}\]
and by taking the limit when $p$ goes to infinity we have finally
\[\sup_{\mu\in pw(x)}\chi^+(\mu)\leq \lambda(x).\]

Let us now show $\sup_{\mu\in pw(x)}\chi^+(\mu)\geq \lambda(x)$. For any $p$ there exist a subsequence $(n_{k,p})_k$ such that 
\[\lambda_p(x)=\lim_k\frac{1}{n_{k,p}}\sum_{l=0}^{n_{k,p}}\log ^+\|d_{f^{l}x}f^p\|.\]
Then if $\mu_p\in pw(x)$ is a weak limit of $(\mu_{n_{k,p}}^x)_k$ we have 
\[ \int \log^+\|d_yf^p\|\, d\mu_p(y)= \lambda_p(x) .\]

For any $\mu\in \mathcal{M}(M,f)$ and for any $z\in X$, the sequences $\left(\int \log^+\|d_yf^p\|\, d\mu(y)\right)_p$ and $(\lambda_p(z))_p$ being both subadditive the terms $\chi^+(\mu)$ and $\lambda(z)$ are respectively the limits of the nonincreasing sequences $\left(\frac{ \int \log^+\|d_yf^{p_k}\|\, d\mu(y)}{p_k}\right)_k$ and $\left(\frac{\lambda_{p_k}(z)}{p_k}\right)_k$ for any increasing sequence of integers $(p_k)_k$ with $p_k \mid  p_{k+1}$ for all $k$.

 Fix such a sequence $(p_k)_k$. We get  :
\begin{eqnarray*}\sup_{\mu\in pw(x)}\chi^+(\mu)&=&\sup_{\mu\in pw(x)}\inf_p\int \frac{\log^+\|d_yf^{p}\|}{p}\, d\mu(y),\\
&=& \sup_{\mu\in pw(x)}\inf_k\int \frac{\log^+\|d_yf^{p_k}\|}{p_k}\, d\mu(y),\\
& \stackrel{\text{Proposition 2.4 in \cite{BD}}}{=}&\inf_{k}\sup_{\mu\in pw(x)}\int \frac{\log^+\|d_yf^{p_k}\|}{p_k}\, d\mu(y),\\
&\geq &\inf_k \int \frac{\log^+\|d_yf^{p_k}\|}{p_k}\, d\mu_{p_k}(y),\\
&\geq &\inf_k \frac{\lambda_{p_k}(x)}{p_k},\\
\sup_{\mu\in pw(x)}\chi^+(\mu)&\geq & \inf_p \frac{\lambda_p(x)}{p}=\lambda(x).
\end{eqnarray*}
\end{proof}

Similarly we may define  the \textbf{strong positive  sum of  Lyapunov exponents} as \[\Sigma \lambda(x):=\lim^\searrow_p\limsup_{n}\frac{1}{np}\sum_{l=0}^{n}\log ^+\max_k\|\Lambda^kdf^p\|_{f^{l}x}\geq \Sigma\chi^+(x).\] 

Following the above proof we get in a similar way :
\begin{lemma}\label{df}
Let $(M,f)$  be a  $C^1$ dynamical system. Then we have  for all $x\in M$ 
\[\Sigma\lambda(x)=\sup_{\mu\in pw(x)}\Sigma\chi^+(\mu).\]
\end{lemma}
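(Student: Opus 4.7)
The plan is to mimic the proof of the preceding lemma line by line, replacing the scalar quantity $\log^+\|d_yf^p\|$ throughout by $\log^+\max_k\|\Lambda^kd_yf^p\|$. The key structural facts needed are exactly the same two: for any fixed $p$ and any $y$, the value $\int \log^+\max_k\|\Lambda^kd_zf^p\|\,d\mu_n^y(z)$ is a Birkhoff average along the orbit of $y$; and for any invariant measure $\mu$ (respectively, any point $x$) the sequence $p\mapsto \int \log^+\max_k\|\Lambda^kd_zf^p\|\,d\mu(z)$ (respectively, $p\mapsto \Lambda_p(x):=\limsup_n\frac{1}{n}\sum_{l=0}^n\log^+\max_k\|\Lambda^kdf^p\|_{f^lx}$) is subadditive. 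Subadditivity is immediate from the Cauchy--Binet-type estimate $\|\Lambda^kd_yf^{p+q}\|\leq \|\Lambda^kd_{f^py}f^q\|\cdot\|\Lambda^kd_yf^p\|$, which upon taking the maximum over $k$ and then $\log^+$ yields the desired additive splitting along orbits.

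For the easy inequality $\sup_{\mu\in pw(x)}\Sigma\chi^+(\mu)\leq \Sigma\lambda(x)$, I take $\mu=\lim_k\mu_{n_k}^x\in pw(x)$. Writing the Birkhoff average and passing to the limit in $k$ with $p$ fixed gives $\int \log^+\max_k\|\Lambda^kd_zf^p\|\,d\mu(z)\leq \Lambda_p(x)$; dividing by $p$ and passing to the decreasing limit in $p$ along a subsequence of multiples yields $\Sigma\chi^+(\mu)\leq \Sigma\lambda(x)$.

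For the reverse inequality, for each $p$ I choose, as in the previous lemma, a subsequence $(n_{k,p})_k$ realizing the limsup defining $\Lambda_p(x)$ and a weak-$*$ accumulation point $\mu_p\in pw(x)$ of the corresponding empirical measures; by continuity of $z\mapsto \log^+\max_k\|\Lambda^kd_zf^p\|$ this yields $\int \log^+\max_k\|\Lambda^kd_zf^p\|\,d\mu_p(z)=\Lambda_p(x)$. Then I fix an increasing sequence $(p_k)_k$ with $p_k\mid p_{k+1}$ so that both subadditive sequences are realized as the decreasing limit along indices $p_k$, and invoke the sup-inf exchange of Proposition 2.4 in \cite{BD}:
\[\sup_{\mu\in pw(x)}\inf_k\int\frac{\log^+\max_k\|\Lambda^kd_zf^{p_k}\|}{p_k}\,d\mu(z)=\inf_k\sup_{\mu\in pw(x)}\int\frac{\log^+\max_k\|\Lambda^kd_zf^{p_k}\|}{p_k}\,d\mu(z).\]
Plugging $\mu=\mu_{p_k}$ into the right-hand side gives the bound $\inf_k\Lambda_{p_k}(x)/p_k=\Sigma\lambda(x)$ from below.

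The one step that is not completely mechanical is the verification that Proposition 2.4 of \cite{BD} applies in the present setting; this requires the continuity and uniform boundedness of the integrands $z\mapsto\log^+\max_k\|\Lambda^kd_zf^{p}\|$ (automatic since $f\in C^1$ and $M$ is compact) together with the subadditivity just noted, so the hypotheses are identical to those invoked in the previous lemma. Once that is in place, the proof is essentially a transcription.
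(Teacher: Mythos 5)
Your proposal is correct and follows exactly the route the paper indicates: the paper's proof of this lemma is literally the single sentence "Following the above proof we get in a similar way," and you carry out precisely that substitution of $\log^+\max_k\|\Lambda^k d_y f^p\|$ for $\log^+\|d_y f^p\|$, correctly noting the two facts that make the transcription work (continuity of the integrand and subadditivity via $\max_k(a_kb_k)\leq(\max_ka_k)(\max_kb_k)$ followed by $\log^+(ab)\leq\log^+a+\log^+b$).
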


A  point $x$ is said to be \textbf{ regular} when  we have  $\Sigma\lambda(x)= \Sigma\chi^+(x)>0.$ 
%\begin{remark} In Tsujii's paper a point $x$ is said regular, when it is in the bassin of an ergodic hyperbolic measure $\mu$ and $\chi_i(x)=\chi_i(\mu)$ for all $i$ (in particular it implies $x$ is forward Lyapunov regular). Obviously any such point is strongly regular. Our notion does  not require $x$ neither to be in the bassin of an ergodic hyperbolic measure nor conditions on the nonpositive Lyapunov exponent of $x$. \end{remark}
For an ergodic measure $\mu$, almost every point $x$ with respect to $\mu$ lies in the basin $\mathcal{B}_\mu$ of $\mu$ (in other terms $pw(x)=\mu$) and  $\Sigma\chi^+(x)=\Sigma\chi^+(\mu)$. Using the ergodic decomposition it follows then from Lemma \ref{df} :

\begin{lemma}
Regular points have full measure with respect to any invariant measure. 
\end{lemma}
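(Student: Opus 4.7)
The strategy is to reduce to the ergodic case via the ergodic decomposition theorem. The author has essentially indicated the ergodic case in the sentences immediately preceding the lemma: for an ergodic $\mu$, Birkhoff's theorem gives $pw(x) = \{\mu\}$ for $\mu$-a.e. $x$, and Kingman's subadditive ergodic theorem applied to each subadditive cocycle $\psi^k_n(x) := \log \|\Lambda^k d_x f^n\|$ (integrable because $M$ is compact and $f$ is $C^1$) promotes the $\limsup$ defining $\chi^k(x)$ to an honest limit equal to $\chi^k(\mu)$ for $\mu$-a.e. $x$. Hence $\Sigma\chi^+(x) = \max(0,\chi^1(\mu),\ldots,\chi^d(\mu)) = \Sigma\chi^+(\mu)$ $\mu$-a.e., while Lemma \ref{df} gives
\[
\Sigma\lambda(x) = \sup_{\nu \in pw(x)} \Sigma\chi^+(\nu) = \Sigma\chi^+(\mu),
\]
so that $\Sigma\lambda(x) = \Sigma\chi^+(x) = \Sigma\chi^+(\mu)$ on a set of full $\mu$-measure.

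For a general invariant measure $\mu$, I would invoke the ergodic decomposition theorem: there is a measurable assignment $x \mapsto \mu_x$ of ergodic invariant measures such that $\mu = \int \mu_x\, d\mu(x)$ and, for $\mu$-a.e. $x$, the empirical measures $\mu_n^x$ converge to $\mu_x$ (so $pw(x) = \{\mu_x\}$). Applying the ergodic case to each $\mu_x$ produces a full $\mu_x$-measure set $R_x$ on which the equality $\Sigma\lambda = \Sigma\chi^+$ holds; Fubini together with the identity $\mu = \int \mu_x \, d\mu(x)$ then shows that the set of regular points has full $\mu$-measure.

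The main subtlety I anticipate is bookkeeping rather than a genuine obstacle: one must verify measurability of the functions $\Sigma\lambda$ and $\Sigma\chi^+$ (which follows from their definitions as countable $\limsup$/$\inf$ of continuous quantities) and measurability of the ergodic decomposition map, which is standard. A secondary issue is the strict positivity $> 0$ in the definition of \emph{regular}: if the ergodic component $\mu_x$ has only non-positive Lyapunov exponents, then $\Sigma\lambda(x) = \Sigma\chi^+(x) = 0$, so the equality still holds but positivity fails. I read the "$>0$" as a small slip and expect the intended content of the lemma to be the equality $\Sigma\lambda(x) = \Sigma\chi^+(x)$ almost surely, the positivity being an extra feature only for invariant measures carrying at least one positive exponent.
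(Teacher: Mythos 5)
Your argument follows the paper's own (very compressed) route: establish the ergodic case from Birkhoff, the subadditive ergodic theorem, and Lemma~\ref{df}, then pass to a general invariant measure by ergodic decomposition. Your remark on the ``$>0$'' in the definition of regular is also well taken — read literally, the lemma would fail for, say, the Dirac mass at an attracting fixed point, so the intended content must be the a.s.\ equality $\Sigma\lambda = \Sigma\chi^+$, with the positivity being a separate hypothesis (as it indeed appears explicitly in Corollary~\ref{sd}).
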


However as already said we are interested in empirical measures with Lebesgue typical initial conditions and we do not assume there exists an invariant measure absolutely continuous with respect to $\Leb$. We denote by $\overline{\Sigma\lambda}$  the essential supremum of $\Sigma\lambda$ with respect to $\Leb$. As the set $PL:=PL(\Leb)$ has full Lebesgue measure we have 
\[\overline{\Sigma\lambda}\leq \sup_{x\in PL}\Sigma\lambda(x)\]
and then it follows from Lemma \ref{df} and $\Sigma\lambda\geq \Sigma \chi^+$ that :
\[\overline{\Sigma \chi^+}\leq \sup_{\mu\in \mathcal{PL}}\Sigma\chi^+(\mu).\]
In general  the equality does not hold as it can be seen again on the eight attractor  \cite{Ott,Lsy},  where we have $0=\overline{\Sigma \chi^+}<\chi^+(\delta_S)=\sup_{\mu\in \mathcal{PL}}\Sigma\chi^+(\mu)$ with $S$ being the associated saddle hyperbolic point.
%We define $R$ as the set of strongly regular points $x$ with $\chi_1(x)>0$.

\subsection{Sinai-Ruelle-Bowen measures}
We consider here a $C^{1+\alpha}$ diffeomorphism $f$ of $M$. An invariant measure $\mu$ is said to be a \textbf{Sinai-Ruelle-Bowen measure} (SRB measure for short) when $\mu$-almost every point has a positive Lyapunov exponent and the disintegration of $\mu$ along the unstable manifolds is absolutely continuous with respect to the volume on the unstable manifolds inherited from the Riemanian structure on $M$.

 From Pesin theory any ergodic hyperbolic SRB measure is physical \cite{Pes}. For an invariant measure $\mu$ of a $C^{1+\alpha}$ diffeomorphism we let $T_\mu$ be the set of (forward) Lyapunov regular points $x$ in the basin $\mathcal{B}_\mu$ of $\mu$ with $\chi_i(x)=\chi_i(\mu)$ for all $i$. In particular any point $x$ in $T_\mu$ satisfies $\Sigma\chi^+(x)=\sum_i\max(0,\chi_i(x))=\Sigma\chi^+(\mu)$ and therefore any such point  is regular in our sense.   Tsujii showed that there exists an  SRB measure when the union of $T_\mu$ over all ergodic hyperbolic measures has positive Lebesgue measure.  He also proved that an ergodic hyperbolic measure $\mu$   is an SRB measure if and only if $T_\mu$ has positive Lebesgue measure.

 Ledrappier and Young \cite{LY} (see also \cite{Qua} for the noninvertible version) gave a thermodynamical characterization of SRB measures : they are exactly the invariant measures with a positive Lyapunov exponent  almost everywhere satisfying the so-called Pesin formula :
\[h(\mu)=\int \sum_i \chi_i^+(x)d\mu(x).\] In particular any SRB measure has positive entropy. It is thus hyperbolic when considering a surface diffeomorphism.  The set of SRB measures is  a face of the Choquet simplex of invariant measures, i.e. the ergodic components of a SRB measure are also SRB measures.  
 As a direct consequence of the aforementioned  results we have for any   $C^{1+\alpha}$ surface diffeomorphism :
 \begin{align*} \sup_{\mu \ \text{SRB}}h(\mu)&\leq \sup_{\mu\text{ physical}}h(\mu),\\
 &\leq  \sup_{\mu\text{ physical}}\chi(\mu),\\
 & \leq \overline{\chi}.
 \end{align*}
 
 \begin{ques}
Do we have  $\sup_{\mu \in \mathcal{PL}}h(\mu)\leq \overline{\chi}$ for a $C^1$ (resp. $C^{1+\alpha}$, $C^\infty$) surface diffeomorphism?
 \end{ques}

%\subsection{The eight attractor and almost Anosov diffeomorphisms}

%\subsection{Tsujii's work}For an invariant measure $\mu$ of a $C^{1+\alpha}$ diffeomorphism we let $T_\mu$ be the set of (Lyapunov) regular points $x$ in the bassin of $\mu$ with $\chi_i(x)=\chi_i(\mu)$ for all $i$. \begin{theorem*}[Theorem A in \cite{Tsu}]Let $f$ be a $C^{1+\alpha}$ diffeomorphism on a compact manifold. Then an ergodic hyperbolic measure with $Leb\left(\mathcal{B}_\mu\cap T_\mu\right)>0$ is an SRB measure. 
%\end{theorem*}He also proved the following result :\begin{theorem*}[Theorem B in \cite{Tsu}]Let $f$ be a $C^{1+\alpha}$ diffeomorphism on a compact manifold with \[Leb\left(\bigcup_{\stackrel{\mu \text{ ergodic}}{\text{hyperbolic}}}\mathcal{B}_\mu\cap T_\mu\right)>0\] then there exists an SRB measure.\end{theorem*}

\section{Statements }

We aim to compare the entropy of physical-like measures with the (strong) positive  sum of  Lyapunov exponents for $C^\infty$ systems. 

\begin{theorem*}
Let $f:M\rightarrow M$ be a $C^{\infty}$ map. Then for Lebesgue almost every point $x$ there exists $\mu_x\in pw(x)$ with  \[(\Sigma\lambda(x)\geq )\  h(\mu_x)\geq \Sigma\chi^+(x).\]
\end{theorem*}

Of course the inequality does not hold true for all $x$, e.g. when $x$ is a periodic point with a positive Lyapunov exponent. However the set of such points has zero Lebesgue measure.

\begin{rem}
For a $C^2$ Axiom A diffeomorphism $f:M\rightarrow M$, there are finitely many ergodic physical measures whose basins cover a set of full Lebesgue measure. Such measures also satisfies Pesin  formula. In this case we have moreover $\Sigma\chi^+(x)=\int \log \Jac(df|_{E_u})(x)\, d\mu(x)$ for $x\in \mathcal{B}_\mu$ by continuity of $x\mapsto\Jac(df|_{E_u})(x)$. Therefore  for Lebesgue almost every point $x$ we get
$ h(\mu_x)= \Sigma\chi^+(x) \text{ with }pw(x)=\{\mu_x\}$.
\end{rem}

 As a direct consequence of the Main Theorem we obtain the following lower bound on the entropy of a physical measure. 

\begin{coro}\label{phy}
Let $\mu$ be a physical measure of a $C^\infty$ map $f:M\rightarrow M$. Then 
\[h(\mu)\geq \overline{\Sigma\chi^+|_{\mathcal{B}_\mu}},\]
where $\overline{\Sigma\chi^+|_{\mathcal{B}_\mu}}$ is the essential supremum of $\Sigma\chi^+$ on $\mathcal{B}_\mu$. 
\end{coro}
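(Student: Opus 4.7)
The plan is to combine the Main Theorem with the trivial observation that for points $x$ in the basin $\mathcal{B}_\mu$ of an invariant measure $\mu$, the set $pw(x)$ of weak-$*$ limit points of empirical measures is by definition the singleton $\{\mu\}$, so that the Main Theorem's $\mu_x$ has no choice but to equal $\mu$.

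More precisely, first I would apply the Main Theorem to the measure $\mathrm{Leb}_{\mathcal{B}_\mu}$, which is well-defined since $\mu$ is physical and therefore $\mathrm{Leb}(\mathcal{B}_\mu)>0$. The Main Theorem, applied to Lebesgue-almost every $x\in M$, produces a measure $\mu_x\in pw(x)$ satisfying $h(\mu_x)\geq \Sigma\chi^+(x)$. Restricting attention to $x\in \mathcal{B}_\mu$ (a set of positive Lebesgue measure), the conclusion holds for $\mathrm{Leb}_{\mathcal{B}_\mu}$-almost every such $x$.

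Now for any $x\in \mathcal{B}_\mu$ one has $pw(x)=\{\mu\}$ by the very definition of the basin, hence necessarily $\mu_x=\mu$. Therefore
\[h(\mu)\geq \Sigma\chi^+(x) \qquad \text{for Lebesgue-a.e. } x\in \mathcal{B}_\mu.\]
Taking the essential supremum of the right-hand side over $x\in \mathcal{B}_\mu$ yields $h(\mu)\geq \overline{\Sigma\chi^+|_{\mathcal{B}_\mu}}$.

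There is no serious obstacle here: all the work is contained in the Main Theorem, and the Corollary is essentially a direct reading of its conclusion at a Lebesgue-typical point of a set of positive Lebesgue measure on which $pw$ is forced to be constant. The only minor point to keep in mind is that the Main Theorem is stated for Lebesgue-a.e. $x\in M$, but since $\mathrm{Leb}(\mathcal{B}_\mu)>0$ the full-measure set meets $\mathcal{B}_\mu$ in a set of full relative Lebesgue measure, which is exactly what is needed to take an essential supremum on $\mathcal{B}_\mu$.
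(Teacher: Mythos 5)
Your proof is correct and is precisely the argument the paper intends: the paper presents Corollary~\ref{phy} as a ``direct consequence of the Main Theorem'' without writing out the details, and the details are exactly what you give — on $\mathcal{B}_\mu$ the set $pw(x)$ collapses to $\{\mu\}$, so the Main Theorem's pointwise bound $h(\mu_x)\ge\Sigma\chi^+(x)$ becomes $h(\mu)\ge\Sigma\chi^+(x)$ for $\mathrm{Leb}$-a.e.\ $x$ in the positive-measure set $\mathcal{B}_\mu$, and the essential supremum over $\mathcal{B}_\mu$ finishes it.
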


The Main Theorem and Corollary \ref{phy} are wrong in finite smoothness. We give in the Appendix A an example of a $C^r$ smooth interval map for any finite $r\geq 1$
with a Dirac physical  measure at a source such that the essential supremum of the Lyapunov exponent on its basin  is positive.\\

We recover Inequality (\ref{ko}) obtained from Kozlovski integral formula.  More precisely we have :

\begin{coro}
Let $f:M\rightarrow M$ be a $C^\infty$ map. Then
\[\max_{\mu\in \mathcal{PL}}h(\mu)\geq \overline{\Sigma\chi^+}.\]
\end{coro}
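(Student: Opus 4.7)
The plan is to combine the Main Theorem (applied pointwise, Lebesgue-almost-everywhere) with upper semicontinuity of the metric entropy on the compact set $\mathcal{M}(M,f)$ available for $C^\infty$ maps, so that we can take a weak-$*$ limit inside the compact set $\mathcal{PL}$.

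First, I would note that by the definition of the essential supremum, one can choose a sequence of points $(x_n)_n \subset M$ such that $\Sigma\chi^+(x_n) \to \overline{\Sigma\chi^+}$, and by throwing out a Lebesgue null set we may further assume $x_n \in PL$ for every $n$, so that $pw(x_n) \subset \mathcal{PL}$. Applying the Main Theorem (again avoiding a Lebesgue null set) provides $\mu_{x_n} \in pw(x_n) \subset \mathcal{PL}$ with $h(\mu_{x_n}) \geq \Sigma\chi^+(x_n)$.

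Next I would use the compactness of $\mathcal{PL}$ (it is a closed subset of the compact space $\mathcal{M}(M,f)$) to extract a weak-$*$ convergent subsequence $\mu_{x_{n_k}} \to \mu_\infty \in \mathcal{PL}$. For $C^\infty$ smooth systems, Newhouse's theorem asserts that the metric entropy $h$ is upper semicontinuous on $\mathcal{M}(M,f)$. Therefore
\[
h(\mu_\infty) \;\geq\; \limsup_{k \to \infty} h(\mu_{x_{n_k}}) \;\geq\; \limsup_{k \to \infty} \Sigma\chi^+(x_{n_k}) \;=\; \overline{\Sigma\chi^+}.
\]
The same upper semicontinuity together with compactness of $\mathcal{PL}$ shows that the supremum $\sup_{\mu \in \mathcal{PL}} h(\mu)$ is actually attained, so we may write $\max$ rather than $\sup$, and the corollary follows.

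The only slightly delicate point is the measurable selection issue: one must verify that the assignment $x \mapsto \mu_x$ provided by the Main Theorem can be made in a way that preserves enough regularity for the above argument. Here this is not really an obstacle, since we only need the existence of a single sequence $(x_n)$ along which the two inequalities $\Sigma\chi^+(x_n) \to \overline{\Sigma\chi^+}$ and $h(\mu_{x_n}) \geq \Sigma\chi^+(x_n)$ hold simultaneously, which is immediate from the fact that both statements are valid on sets of full Lebesgue measure. The substantive ingredients imported from outside are Newhouse's upper semicontinuity of entropy in the $C^\infty$ category and the Main Theorem itself; everything else is soft.
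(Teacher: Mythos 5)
Your proof is correct and follows essentially the same route as the paper's: pick Lebesgue-typical points where $\Sigma\chi^+$ is near its essential supremum, apply the Main Theorem to get measures in $\mathcal{PL}$ with entropy at least that value, and conclude via Newhouse's upper semicontinuity of metric entropy for $C^\infty$ maps together with compactness of $\mathcal{PL}$. Your write-up merely makes the subsequence extraction explicit and correctly observes that no measurable selection is needed, both of which are implicit in the paper's terser version.
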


\begin{proof}
For any $\epsilon>0$ the set $\{\Sigma\chi^+>\overline{\Sigma\chi^+}-\epsilon\}$ has positive Lebesgue measure, so that there exists a point  $x$ in this set with $pw(x)\subset \mathcal{PL}$ which satisfies the conclusion of the Main Theorem, i.e. there exist $\mu_x\in pw(x)$ with 
\[h(\mu_x)\geq \Sigma\chi^+(x)>\overline{\Sigma\chi^+}-\epsilon.\]
We conclude by  upper semicontinuity of the metric entropy for $C^\infty$ maps \cite{new} and by compactness of $\mathcal{PL}$.
 \end{proof}

%The Main Theorem does not hold true in intermediate smoothness. Indeed Kozlovski exhibits in \cite{Kos} an example of $C^r$ interval map for any $r$ with zero topological entropy and $\lim_n\frac{1}{n}\int \log^+|df^n|_xd\Leb(x)$

For $C^\infty$ maps %(or $C^{1+\alpha}$ surface diffeomorphisms)
 we get the following refinement of Tsujii's theorem. 

\begin{coro}\label{sd}
Let $f:M\rightarrow M$ be a $C^\infty$ map.
\begin{enumerate}
\item Assume the set of  regular points in $\{\Sigma\chi^+>0\}$ has positive Lebesgue measure. Then $f$ admits an SRB measure. 
\item Let $\mu$ be a physical measure such that the set of  regular points in $\{\Sigma\chi^+>0\}\cap \mathcal{B}_\mu$ has positive Lebesgue measure. Then $\mu$ is an SRB measure. 
\end{enumerate}
\end{coro}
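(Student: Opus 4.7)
Both parts should follow by combining the Main Theorem with Lemma~\ref{df}, Ruelle's inequality, and the Ledrappier--Young characterization of SRB measures via Pesin's entropy formula. The common core runs as follows: at a Lebesgue generic regular point $x$ with $\Sigma\chi^+(x)>0$, the Main Theorem supplies some $\mu_x\in pw(x)$ with $h(\mu_x)\geq \Sigma\chi^+(x)$, while Lemma~\ref{df} combined with the regularity of $x$ gives
\[\Sigma\chi^+(\mu_x)\leq \sup_{\nu\in pw(x)}\Sigma\chi^+(\nu)=\Sigma\lambda(x)=\Sigma\chi^+(x).\]
Ruelle's inequality $h(\mu_x)\leq\Sigma\chi^+(\mu_x)$ then forces Pesin's entropy formula for $\mu_x$ with positive entropy:
\[h(\mu_x)=\Sigma\chi^+(\mu_x)>0.\]

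For part~(1), I would decompose $\mu_x$ ergodically as $\mu_x=\int\nu_\xi\,d\tau(\xi)$. The affinity of $h$ and of $\Sigma\chi^+$ under ergodic decomposition, together with Ruelle's inequality applied to each ergodic component $\nu_\xi$, imply $h(\nu_\xi)=\Sigma\chi^+(\nu_\xi)$ for $\tau$-a.e.\ $\xi$. Since $\int\Sigma\chi^+(\nu_\xi)\,d\tau=\Sigma\chi^+(\mu_x)>0$, a positive $\tau$-measure family of components satisfies $\Sigma\chi^+(\nu_\xi)>0$; by Ledrappier--Young each such component is an ergodic hyperbolic SRB measure, which proves item~(1).

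For part~(2), fix a Lebesgue generic regular point $x\in\mathcal{B}_\mu$ with $\Sigma\chi^+(x)>0$. Since $x\in\mathcal{B}_\mu$ forces $pw(x)=\{\mu\}$, the only candidate for $\mu_x$ in the core argument is $\mu$ itself, so the chain above becomes $h(\mu)=\Sigma\chi^+(\mu)>0$; in particular Pesin's entropy formula holds for $\mu$. The main obstacle is then to promote this Pesin identity to the strict notion of SRB used in the paper, which requires a positive Lyapunov exponent $\mu$-almost everywhere. To handle it I would again invoke the ergodic decomposition of $\mu$: by the reasoning of part~(1), every ergodic component with $\Sigma\chi^+>0$ is SRB, while any ergodic component with $\Sigma\chi^+=0$ has zero entropy. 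The physicality of $\mu$ should then exclude a non-trivial contribution from such zero-exponent components---heuristically, a degenerate ergodic summand would need its own basin of positive Lebesgue measure to be compatible with Lebesgue statistics on $\mathcal{B}_\mu\cap\{\Sigma\chi^+>0\}$, which contradicts $\Sigma\chi^+=0$ on it---and one concludes that $\mu$ itself is SRB.
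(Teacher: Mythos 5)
Your core chain of (in)equalities
\[h(\mu_x)\geq\Sigma\chi^+(x)=\Sigma\lambda(x)\geq\Sigma\chi^+(\mu_x)\geq h(\mu_x)\]
is exactly the paper's argument, combining the Main Theorem, Lemma~\ref{df}, Ruelle's inequality, and the Ledrappier--Young characterization in the same way. Where you go beyond the paper is in attending explicitly to the clause, in the paper's definition of an SRB measure, that $\mu_x$-almost every point should carry a positive Lyapunov exponent. The paper leaps directly from "$\mu_x$ satisfies Pesin's entropy formula with positive entropy" to "$\mu_x$ is an SRB measure," and for item (2) simply says the proof "follows the same lines." Your ergodic decomposition step for item (1) is the honest way to extract from $\mu_x$ an ergodic hyperbolic SRB component, and is correct; it in fact repairs a small imprecision in the paper, since $\mu_x$ itself need not be SRB in the strict sense even though it contains an SRB component, and item (1) only asks for existence.

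For item (2) your worry is genuine, but the heuristic you offer does not resolve it. A degenerate ergodic summand of $\mu$ with $\Sigma\chi^+=0$ has no reason to carry its own positive-Lebesgue-measure basin: every $x\in\mathcal{B}_\mu$ has empirical measures converging to the mixture $\mu$ itself, not to its individual ergodic components, so physicality of $\mu$ says nothing directly about the components. You have therefore located a real subtlety that the paper glosses over rather than a defect in your own understanding: under the strict definition of SRB adopted in the paper (positive exponent $\mu$-a.e.), the passage from "$\mu$ satisfies Pesin's formula" to "$\mu$ is SRB" for a possibly non-ergodic $\mu$ requires an additional argument that neither your write-up nor the paper's one-line remark supplies. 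If one reads "SRB" more loosely as "invariant measure satisfying Pesin's entropy formula" --- the content actually delivered by Ledrappier--Young --- then both your proof and the paper's close without further ado.
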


We recall Tsujii's results only deal with diffeomorphisms but under the weaker $C^{1+\alpha}$ smoothness assumption.  Contrarily to Tsujii's statement  we do not assume in the second item neither ergodicity nor hyperbolicity of the physical measure $\mu$. 

\begin{proof} We only prove the first item. The proof of the second one follows the same lines.
According to the Main Theorem, for Lebesgue almost every $x$ in $\{\Sigma\chi^+=\Sigma\lambda>0\}$ there is an SRB measure $\mu_x\in pw(x)$    satisfying
\[h(\mu_x)\geq \Sigma\chi^+(x).\] 
Moreover it follows from Ruelle's inequality and Lemma \ref{df} that 
\[\Sigma\lambda(x)\geq \Sigma\chi^+(\mu_x)\geq h(\mu_x).\]
Since we have $\Sigma\chi^+(x)=\Sigma\lambda(x)$ the measure $\mu_x$ satisifes Pesin's entropy formula and is therefore an SRB measure.
\end{proof}

Unlike the Main Theorem, which is false in finite smoothness,  we conjecture Corollary \ref{sd} holds true for any $C^{1+\alpha}$ map. It can be deduced from the Reparametrization Lemma in \cite{Bur} the case of $C^{1+\alpha}$ interval maps and surface diffeomorphisms. However as it involves stronger technicalities we prefer to only consider $C^{\infty}$ maps in the present paper.  

Observe also that the $C^\infty$ asumption does not imply that the basin of an ergodic physical measure contains a positive Lebesgue measure subset of regular points. If we consider again the eight attractor of Bowen \cite{Lsy} the strong Lyapunov exponent $\Sigma\lambda(x)$ is equal to the unstable Lyapunov exponent of the saddle physical measure, whereas according to our Main Theorem  we have $\Sigma\chi^+(x)=0$ for Lebesgue almost every point in the basin.   

\section{Some technical lemmas}

For a $C^2$ Anosov surface diffeomorphism one build SRB measures as follows. One takes the inherited Lebesgue measure $\mu$ on  a local unstable manifold and then  checks that the limit $\nu$ of $\left(\frac{1}{n}\sum_{0\leq k<n}f^k\mu\right)_n$ disintegrates absolutely continuously on unstable manifolds with respect to the Lebesgue measure. Here we follow somehow a similar approach by considering the Lebesgue measure  $\mu$ on a smooth disc with Lebesgue typical exponential growth. Then we estimate the entropy of $\nu$ by using a  \textit{Reparametrization Lemma} of dynamical balls.

\subsection{Lyapunov exponent along smooth leaves}
In the Lemma below we select the appropriate smooth disc.
\begin{lemma}\label{fol} Let $1\leq k\leq d$ and let $a<\overline{\chi^k}$. We consider a Borel subset $E$ of $\{a<\chi^k \}$ with positive Lebesgue measure. Then there exist a compact  subset $F$ of $E$ and a  foliation box $U$ with respect to a $C^\infty$ smooth $k$-foliation $\mathcal{F}$  with $\Leb(U\cap F)>0$ such that  \[\forall x\in U\cap F, \     \chi^k(x,T_x\mathcal{F})> a,\] where $T_x\mathcal{F}$ denotes any unit-norm element of $\Lambda^k(TM)$ generating the tangent space at $x$ of the $\mathcal{F}$-leaf containing $x$. 
\end{lemma}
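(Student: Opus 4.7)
The plan is the standard measurable-selection + Lusin + local-chart recipe, capped by a robustness step that replaces a Borel ``good direction'' by a smooth constant one. Since $\{\chi^k>a\}=\bigcup_{a'>a}\{\chi^k>a'\}$, I first shrink $E$ so that $E\subset\{\chi^k>a'\}$ for some fixed $a'>a$, still with $\Leb(E)>0$.

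For each $x\in M$ and each $n\ge 1$ pick $W_n(x)\in\mathrm{Gr}_k(T_xM)$ on which the operator norm $\|\Lambda^k d_xf^n\|$ is attained (the supremum is reached on decomposable vectors by the singular value decomposition), choosing $W_n$ Borel in $x$ via the Kuratowski--Ryll-Nardzewski selection theorem. For $x\in E$ the set $N(x)=\{n:\frac{1}{n}\log\|\Lambda^k d_xf^n\|>a'\}$ is infinite, so by compactness of the Grassmannian the sequence $(W_n(x))_{n\in N(x)}$ has cluster points; another measurable selection produces a Borel section $V:E\to\mathrm{Gr}_k(TM)$ with $V(x)$ one such cluster point. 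Lusin's theorem then provides a compact $F_0\subset E$ with $\Leb(F_0)>0$ on which $V$ is continuous.

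Fix a Lebesgue density point $x_0\in F_0$, a chart trivializing $TM$ near $x_0$, and let $\mathcal{F}$ be the smooth constant (parallel) $k$-foliation in that chart with tangent direction $V(x_0)$. Choose a foliation box $U\ni x_0$ so small that $\mathrm{dist}_{\mathrm{Gr}_k}(V(x),V(x_0))<\delta$ for all $x\in F_0\cap U$, with $\delta$ fixed later; by the density point property $F:=F_0\cap\overline{U}$ still has positive Lebesgue measure. Given $x\in F$, extract a subsequence $(n_j)\subset N(x)$ with $W_{n_j}(x)\to V(x)$; for $j$ large, $\mathrm{dist}_{\mathrm{Gr}_k}(W_{n_j}(x),V(x_0))<2\delta$, and an elementary linear estimate on $\Lambda^k T_xM$, using that $W_{n_j}(x)$ attains the operator norm, gives
\[
\|\Lambda^k d_xf^{n_j} V(x_0)\|\ge \|\Lambda^k d_xf^{n_j}\|\,(1-C\delta),
\]
for a constant $C$ depending only on $k,d$. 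Dividing by $n_j$, taking logarithms, using $n_j\in N(x)$, and letting $j\to\infty$ gives $\chi^k(x,V(x_0))\ge a'>a$, as required.

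The crux is that $\chi^k(x,V)$ is a $\limsup$ of continuous quantities and is therefore not semicontinuous in the direction $V$, so a direct perturbation $V(x)\rightsquigarrow V(x_0)$ after Lusin would not generally preserve the strict inequality. The trick is to choose $V(x)$ as a cluster point of the norm-maximizing directions $W_n(x)$: along that subsequence the constant direction $V(x_0)$ still captures a definite fraction of the full operator norm $\|\Lambda^k d_xf^n\|$, and since $\frac{1}{n}\log\|\Lambda^k d_xf^n\|>a'$ holds along the same subsequence by construction of $N(x)$, this fraction transfers into a lower bound on $\chi^k(x,V(x_0))$.
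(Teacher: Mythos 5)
Your proof is correct, and it takes a genuinely different route from the paper's. The paper reduces to $k=1$ and works with the Lyapunov filtration directly: it takes a compact $F\subset E$ on which the Borel-measurable Lyapunov subspaces $V_i(\cdot)$ are continuous, fixes a density point $x_0$ and a vector $u\in T_{x_0}M\setminus V_2(x_0)$, and observes that the constant (via $\exp_{x_0}$) direction generated by $u$ remains outside $V_2(y)$ for $y\in F$ near $x_0$; then $\chi(y,T_y\mathcal{F})=\chi_1(y)>a$ follows with no quantitative estimate, because the gap structure of the Lyapunov spectrum makes the $\limsup$ along any vector outside $V_2(y)$ exactly the top exponent. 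You instead avoid the filtration entirely: you extract a Borel choice $V(x)$ of cluster directions of the norm-maximizing $k$-planes $W_n(x)$ along a subsequence of times $N(x)$ where $\frac{1}{n}\log\|\Lambda^k d_xf^n\|>a'$, and prove by an explicit angle estimate that the fixed constant direction captures a definite fraction $(1-C\delta)$ of the operator norm along that same subsequence. Each argument has to circumvent exactly the failure of semicontinuity of $\chi^k(x,\cdot)$ in the direction variable that you correctly flag at the end; the paper does so via the discreteness of the spectrum (outside $V_2$ the $\limsup$ jumps to $\chi_1$), while you do so by tracking the maximizing subsequence. Your version has the modest advantages of not relying on the measurability of the full filtration (only of the elementary norm-maximizers) and of treating all $k$ uniformly without the ``WLOG $k=1$'' reduction, which for $k>1$ implicitly requires the remark that the Pl\"ucker image of $\mathrm{Gr}_k$ is not contained in any proper subspace of $\Lambda^k T_{x_0}M$, hence meets the complement of $V_2^{(k)}(x_0)$; the paper's argument is shorter once the filtration machinery is granted.
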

\begin{proof} We may assume $k=1$ without loss of generality. Let $F$ be a compact subset of $E$ with $\Leb(F)>0$ such that 
$x\mapsto V_i(x)$ is continuous on $F$ for all $i$, where $(V_i(x))_i$ denote the Lyapunov subspaces at $x$. Let $x$ be a Lebesgue density point of $F$ and let  $u\in V_1(x)\setminus\left(\bigcup_{i\geq 2} V_i(x)\right)$. We denote the exponential map at $x$ by $\exp_x:T_xM\rightarrow M$. Then for a small enough neighborhood $U$ of $x$ the vector $\left(d\exp_x(u)\right)_y$ belongs to  $V_1(y)\setminus\left(\bigcup_{i\geq 2} V_i(y)\right)$ for all $y\in U\cap F$. Finally this vector generates the tangent space at $y$ of the foliation $\mathcal{F}=\exp_x(\mathcal{F}_x)$ where $\mathcal{F}_x$ is the foliation in $u$-directed lines of $T_xM$. 
\end{proof}

\subsection{Entropy computation}
We state now a technical entropy computation due to Misiurewicz \cite{Mis} in its elementary proof of the variational principle for the entropy, which we will use to  bound  from below the entropy of $\nu$. 
For a probability space $(X,\mathcal{B},\mu)$ and a finite measurable partition $P$ of $X$ we denote the static entropy of $P$ as follows  \\\[H_\mu(P):=-\sum_{A\in P}\mu(A)\log \mu(A).\]

\begin{lemma}\cite{Mis}\label{fdf}Let $(X,f)$ be a Borel system. We consider a sequence $(\mu_n)_n$ of probability Borel measures on $X$ and the associated sequence $(\nu_n)_n$ given for all $n>0$ by $\nu_n=\frac{1}{n}\sum_{0\leq k<n}f^k\mu_n$. Then  we have with $P^n=\bigvee_{k=0}^{n-1}f^{-k}P$
\[\forall m>0, \ \frac{1}{m}H_{\nu_n}(P^m)\geq \frac{1}{n}\left(H_{\mu_n}(P^n)-3m\log\sharp P\right).\]
%where $P^n$ is the $n$-iterated partition given by \[P^n=\{A_0\cap f^{-1}A_1\cap \cdots \cap f^{-(n-1)A_{n-1}}\, |\, \ A_0,\cdots , A_{n-1}\in P\}.\]
\end{lemma}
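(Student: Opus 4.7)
The plan is to follow Misiurewicz's standard block-decomposition argument, combining subadditivity of static entropy evaluated at $\mu_n$ with concavity of static entropy evaluated at the averaged measure $\nu_n$. The elementary identity I will lean on throughout is $H_\mu(f^{-k}Q) = H_{f^k_*\mu}(Q)$, which converts pullback refinements of $P^n$ under $\mu_n$ into sums of entropies of push-forwards $f^k_*\mu_n$ evaluated against the shorter partition $P^m$.

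Fix $m>0$. For each shift $j \in \{0,1,\ldots,m-1\}$, Euclidean division yields $n - j = a_j m + r_j$ with $0 \leq r_j < m$, and one has the decomposition
\[
P^n \;=\; \Bigl(\bigvee_{k=0}^{j-1} f^{-k}P\Bigr)\;\vee\;\Bigl(\bigvee_{i=0}^{a_j-1} f^{-(j+im)} P^m\Bigr)\;\vee\;\Bigl(\bigvee_{k=j+a_j m}^{n-1} f^{-k}P\Bigr).
\]
The prefix and suffix together involve $j + r_j < 2m$ factors, so subadditivity of $H_{\mu_n}$ and the trivial bound $H_{\mu_n}(f^{-k}P) \leq \log \sharp P$ give
\[
H_{\mu_n}(P^n) \;\leq\; \sum_{i=0}^{a_j-1} H_{f^{j+im}_*\mu_n}(P^m) + 2m \log \sharp P.
\]

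Summing over $j = 0, \ldots, m-1$, the indices $\{j + im : 0 \leq j < m,\ 0 \leq i < a_j\}$ all lie in $\{0, 1, \ldots, n-1\}$ and no integer appears twice (each such $k$ has a unique decomposition $k = im + j$ with $0 \leq j < m$). Hence the summed right-hand side is dominated by $\sum_{k=0}^{n-1} H_{f^k_*\mu_n}(P^m)$. Concavity of static entropy applied to the convex combination $\nu_n = \frac{1}{n}\sum_{k=0}^{n-1} f^k_*\mu_n$ then yields
\[
n\, H_{\nu_n}(P^m) \;\geq\; \sum_{k=0}^{n-1} H_{f^k_*\mu_n}(P^m).
\]
Chaining both bounds delivers $m H_{\mu_n}(P^n) \leq n H_{\nu_n}(P^m) + 2m^2 \log \sharp P$, which after dividing by $nm$ produces the announced inequality (in fact slightly sharper than the stated constant $3m$).

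There is no genuine obstacle here beyond index bookkeeping: the only point to watch is that the total length of prefix plus suffix in the block decomposition is bounded linearly in $m$ uniformly in $j$ and $n$, which is immediate from Euclidean division. Conceptually, the content of the lemma is that time-averaging in the definition of $\nu_n$ smooths out the dependence on the initial measure, so that the long dynamical partition $P^n$ seen through $\mu_n$ is essentially no more complex than the short partition $P^m$ seen through $\nu_n$, up to an error of order $m^2 \log \sharp P / n$ that will become negligible once $n \to \infty$ is sent first and $m \to \infty$ afterwards in the application to the Main Theorem.
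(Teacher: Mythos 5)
Your argument is correct and is exactly the standard Misiurewicz block-decomposition proof that the paper invokes by citation rather than reproducing: split $P^n$ at each of the $m$ shifts, use subadditivity of $H_{\mu_n}$ plus the trivial $\log\sharp P$ bound on the prefix and suffix of total length $j+r_j<2m$, convert $H_{\mu_n}(f^{-k}P^m)$ to $H_{f^k_*\mu_n}(P^m)$, sum over $j$ exploiting disjointness of the residue classes, and finish with concavity of static entropy applied to $\nu_n=\frac1n\sum f^k_*\mu_n$. Your bookkeeping even yields the sharper error term $2m\log\sharp P$ in place of the paper's $3m\log\sharp P$, which of course only strengthens the stated inequality.
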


\subsection{Local distortion}
The key argument which allows to control the distortion is given by the following lemma whose proof relies on tools of semi-algebraic geometry.  
For $x\in M$, $n\in \mathbb{N}$ and $\alpha>0$ we let $B_f(x,n,\alpha)$ be the dynamical ball  at $x$ of length $n$ and size $\alpha$ :
\[B_f(x,n,\alpha):=\{y\in M, \, \mathsf d(f^lx,f^ly)<\alpha \text{ for }l=0,...,n-1\}.\]

\begin{Lemma}\label{six}Let $f:M\rightarrow M$ be a $C^\infty$ map. Let $a\in \mathbb{R}$, $\gamma\in \mathbb{R}^+\setminus \{0\}$ and let $k$ be a positive integer with  $k\leq d$. For some $\alpha>0$, for all $x\in M$ and for all $\sigma:[0,1]^k\rightarrow M$ of class $C^\infty$ with $\|d\sigma\|\leq 1$ and $\Lambda^kd_t\sigma\neq 0$ for all $t\in [0,1]^k$, there exists  for large enough $n$ (depending on $\sigma$ but not on $x$) a family of  reparametrizations $(\theta_i^n:[0,1]^k\circlearrowleft)_{i\in I_n}$ with the following properties:
\begin{itemize}
\item $\bigcup_{i\in I_n} \Ima(\theta_i^n)\Supset \footnote{By $\bigcup_{i\in I}A_i\Supset B $ we mean that $\bigcup_{i\in I}A_i\supset B $ and $A_i\cap B\neq\emptyset$ for all $i\in I$.} \left\{t\in [0,1]^k,  \ \frac{\|\Lambda^kd_t(f^n\circ\sigma)\|}{\|\Lambda^kd_t\sigma\|}\geq e^{na} \text{ and } \sigma(t)\in B(x,n,\alpha)\right\},$ 
\item $\forall i\in I_n,\ \|d(f^n\circ \sigma \circ \theta^n_i)\|\leq 1$,
\item $\forall i\in I_n \ \forall t,t'\in \Ima(\theta^n_i), \ \frac{\|\Lambda^kd_t(f^n\circ \sigma)\|}{\|\Lambda^kd_{t'}(f^n\circ\sigma)\|}\leq 2$,
\item $\sharp I_n\leq e^{\gamma n}$.
\end{itemize}
\end{Lemma}

Such Reparametrizations Lemmas first appear in the pioneering work of Yomdin \cite{Yom} (see also \cite{Gr}) in his proof of Shub's entropy conjecture for $C^\infty$ systems. In Yomdin's earlier form the control of the distortion given by the third item did not appear. Moreover the reparametrized set was the whole dynamical ball (here this is the case when $f$ is a local diffeomorphim by choosing $a$ small enough). Others similar forms of the Reparametrization Lemma were used succesfully by the author to study symbolic extensions and exponential growth of periodic points for $C^r$ surface diffeomorphisms \cite{Bur, Burr}. The technical proof could be skipped at a first reading.\\

We first establish a version of the Reparametrization Lemma for a $C^\infty$ nonautonomous system $\mathfrak{F}=(\mathfrak f_l:B\rightarrow \mathbb{R}^d)_{l\in \mathbb{N}}$   on the unit Euclidean  ball $B$ of $\mathbb{R}^d$. For $m\in \mathbb{N}$ we let $\mathfrak{F}_m$ be the finite sequence of $C^\infty$ maps $\mathfrak{F}_m:=(\mathfrak f_l)_{0\leq l<m}$. In this context we define the dynamical ball  $B_{\mathfrak{F}_m}$  as follows
$$B_{\mathfrak{F}_m}:=\{y\in B, \ \mathfrak f_{l}\circ \cdots \circ \mathfrak f_0(y)\in B \text{ for }0\leq	l <m\}.$$ 
We  then put  $\mathfrak f^{m+1}=\mathfrak f_{m}\circ \cdots \circ \mathfrak f_0:B_{\mathfrak{F}_m}\rightarrow \mathbb{R}^d$ (let also  $\mathfrak f^0$ be the identity map of $\mathbb{R}^d$). 

Let $\mathcal A= (a_l)_{l\in \mathbb N}$ be an infinite sequence of integers. For the corresponding finite sequences  $\mathcal{A}_m:=(a_0,...,a_{m-1})$  we also consider the following dynamical ball induced by $\mathfrak{F}_m$ on the $k$-exterior bundle of the tangent space  $T\mathbb{R}^d$ endowed with the norm induced by the Euclidean norm:
\begin{multline*}B^k(\mathcal{A}_m):=\{(y,v)\in \Lambda^k(T\mathbb{R}^d) , \  \|v\|=1, \  y\in B_{\mathfrak{F}_m}  \\ \text{ and }  \forall l=0,...,m-1,\,  \lceil\log\|\Lambda^kd_{\mathfrak f^ly}\mathfrak f_l(v_l)\|\rceil=a_l \},
\end{multline*}
with the notations $v_l=\frac{\Lambda^kd_y\mathfrak f^l(v)}{\|\Lambda^kd_y\mathfrak f^l(v)\|}$,  $l=0,...,m-1$, and $\lceil\cdot\rceil$ for the ceiling function.

For a $C^\infty $ smooth $k$-disc $\mathfrak s:[0,1]^k\rightarrow \mathbb{R}^d$ we aim to reparametrize the set $C_\mathfrak{s}(\mathcal{A}_n)$ defined as follows :
$$C_\mathfrak{s}(\mathcal{A}_m)=\left\{t\in [0,1]^k,  \ \left(\mathfrak s(t),\frac{\Lambda^kd_t\mathfrak s}{\|\Lambda^kd_t\mathfrak s\|}\right)\in B^k(\mathcal{A}_m) \right\}.$$

\begin{prop}\label{auto}
With the above notations there  exists, for any integer $r>2$,  a family of reparametrizations  $(\phi^m_i:[0,1]^k\circlearrowleft)_{i\in \mathcal{I}(\mathcal{A}_m)}$  with the following properties :
\begin{enumerate}
\item $\bigcup_{i\in \mathcal{I}(\mathcal{A}_m)} \Ima(\phi^m_i)\Supset C_\mathfrak{s}(\mathcal{A}_m)$,
\item $\forall i\in \mathcal{I}(\mathcal{A}_m) \ \forall s=0,...,r $, $$\|d^s\left(\mathfrak f^m\circ \mathfrak s \circ \phi_i^m\right)\|\leq 1,$$
\item $\forall i\in \mathcal{I}(\mathcal{A}_m) \ \forall s=1,...,r-1$,  $$\|d^s\left(t\mapsto\Lambda^kd_{\phi^m_i(t)}(\mathfrak  f^m\circ \mathfrak s)\right)\|\leq \frac{1}{2}\max_{u\in [0,1]^k}\|\Lambda^kd_{\phi^m_i(u)}(\mathfrak f^m\circ \mathfrak s)\|,$$
\item $\sharp \mathcal{I}(\mathcal{A}_m)\leq C(r,d)^m\prod_{l=0}^{m-1}\max\left(1,\|d_0\mathfrak f_l\|^{k/r},\left(\frac{\max(1,\|\Lambda^kd_0 \mathfrak f_l\|)}{e^{a_l}}\right)^{\frac{k}{r-1}}\right)$ with $C(r,d)$  being a universal function in $r$ and $d$.
\end{enumerate}
\end{prop}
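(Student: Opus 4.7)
The plan is to prove Proposition \ref{auto} by induction on $m$. For $m=0$ a single application of a quantitative Gromov--Yomdin algebraic lemma to the fixed $C^\infty$ disc $\mathfrak{s}$ yields a universal number of affine reparametrizations of $[0,1]^k$ covering it and realizing (2) and (3); the $C^r$-bounds of $\mathfrak{s}$ itself get absorbed into the constant $C(r,d)$.

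For the inductive step, fix $i\in \mathcal{I}(\mathcal{A}_m)$ and write $g_i:=\mathfrak{f}^m\circ \mathfrak{s}\circ \phi_i^m$. By induction $g_i$ satisfies $\|d^sg_i\|\leq 1$ for $s\leq r$ and the distortion bound (3), so in particular its image lies in $B$. Set $h_i:=\mathfrak{f}_m\circ g_i$ and consider the subset $T_i\subset[0,1]^k$ on which $\lceil\log\|\Lambda^kd_t h_i\|\rceil=a_m$ and $h_i(t)\in B$; the goal is to cover $T_i$ by affine reparametrizations $\psi^{m+1}_{i,j}$ of $[0,1]^k$ achieving (2) and (3) with $m$ replaced by $m+1$. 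By Fa\`a di Bruno and the inductive $C^r$-bound on $g_i$, one has $\|d^s h_i\|\leq C(s)\max_{s'\leq s}\|d^{s'}_0\mathfrak{f}_m\|$; the higher derivatives $\|d^{s'}_0\mathfrak{f}_m\|$ for $s'\geq 2$ are uniformly bounded (each $\mathfrak{f}_l$ being a single application of $f$ in a fixed chart) and will be absorbed into $C(r,d)$, so effectively the only relevant scales are $\|d_0\mathfrak{f}_m\|$ and $\|\Lambda^kd_0\mathfrak{f}_m\|$.

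The key quantitative input is the Gromov--Yomdin algebraic lemma: a $C^r$-map $h:[0,1]^k\rightarrow\mathbb{R}^d$ with $\|h\|\leq 1$ and $\|d^rh\|\leq M$ can be covered, via semi-algebraic cell decomposition applied to its degree-$r$ Taylor polynomial together with Kolmogorov-type interpolation for the intermediate derivatives, by $C(r,d)M^{k/r}$ affine reparametrizations whose compositions with $h$ have $C^r$-norm at most $1$. Applied to $h_i$ with $M\lesssim \|d_0\mathfrak{f}_m\|$ this produces $\leq C(r,d)\|d_0\mathfrak{f}_m\|^{k/r}$ pieces realizing (2). To enforce (3), the same lemma is applied in order $r-1$ to the vector-valued function $t\mapsto\Lambda^kd_th_i$, whose $C^{r-1}$-norm is of order $\|\Lambda^kd_0\mathfrak{f}_m\|$ while its typical value on $T_i$ is $\sim e^{a_m}$; subdividing each of the previous pieces into a further $\leq C(r,d)(\max(1,\|\Lambda^kd_0\mathfrak{f}_m\|)/e^{a_m})^{k/(r-1)}$ affine pieces brings the relative $C^{r-1}$-variation of the Jacobian below half its maximum. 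Taking the maximum of the two subdivisions and iterating over $l=0,\dots,m-1$ yields the count in (4).

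The main obstacle is precisely the quantitative algebraic lemma underpinning the above scheme, with its sharp exponents $k/r$ and $k/(r-1)$ and its joint control of the $C^r$-norm of the composed disc together with the distortion of its $\Lambda^k$-Jacobian. One proves it by applying Gromov's semi-algebraic cell decomposition to the Taylor polynomial of $\mathfrak{f}_m$ (the tail, in fixed-order $C^r$-norm, being absorbable into $C(r,d)$) and using Kolmogorov interpolation to propagate bounds on $d^r$ to all lower derivatives; the bookkeeping of the resulting constants along the whole orbit $l=0,\dots,m-1$, and the proof that condition (3) is genuinely preserved when combined with (2), are the delicate points.
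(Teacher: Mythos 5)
Your overall architecture — induction on $m$, applying a Yomdin--Gromov-type algebraic lemma at each step to control both the $C^r$ norm of the composed disc and the distortion of its $\Lambda^k$-Jacobian — matches the paper's strategy. But there is a genuine gap in how you invoke the algebraic lemma, and it is precisely what the paper's Lemma \ref{alg} is designed to fix.

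You propose a \emph{sequential} subdivision: first apply the algebraic lemma to $h_i=\mathfrak{f}_m\circ g_i$ to obtain roughly $\|d_0\mathfrak{f}_m\|^{k/r}$ pieces achieving (2), then subdivide each such piece by a second application in order $r-1$ to the Jacobian map, obtaining a further $(\max(1,\|\Lambda^kd_0\mathfrak{f}_m\|)/e^{a_m})^{k/(r-1)}$ pieces for (3). This yields, per step $l$, a \emph{product} of the two counts, whereas (4) asserts a \emph{maximum} of them inside the product over $l$. Your phrase ``taking the maximum of the two subdivisions'' does not resolve this: a common refinement of two decompositions has cardinality controlled by the product, not the max. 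Moreover, the second-stage reparametrizations produced by the algebraic lemma are not affine contractions, so composing them with the first-stage ones need not preserve the unit $C^r$-bound on $G_0\circ\psi$; derivatives of compositions accumulate, and one would only retain a bound by some constant $>1$, which then compounds along the orbit. The paper avoids both problems by proving a \emph{simultaneous} variant of Gromov's Main Lemma (Lemma \ref{alg}) which, from a single semi-algebraic decomposition applied jointly to the interpolating polynomials of $G_0$ (degree $r$) and $G_1$ (degree $r-1$), produces one family of reparametrizations controlling both maps at once, with cardinality bounded by $\max(1,\|d^rG_0\|^{k/r},\|d^{r-1}G_1\|^{k/(r-1)})$ rather than their product. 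This joint lemma is the missing idea; without it you neither get (4) as stated nor preserve (2) across stages.

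Two further points you gloss over but which the paper has to address explicitly. First, the map you feed into the second application must be normalized: the paper sets $G_1(t)=\Lambda^kd_{\phi_i^m(t)}(\mathfrak{f}^{m+1}\circ\mathfrak{s})\big/\bigl(e^{a_m}\max_u\|\Lambda^kd_{\phi_i^m(u)}(\mathfrak{f}^m\circ\mathfrak{s})\|\bigr)$, so that $\|G_1\|\leq 1$ precisely on $\Ima(\phi_i^m)\cap C_{\mathfrak{s}}(\mathcal{A}_{m+1})$ (this uses the definition of $\mathcal{A}_{m+1}$) and $\|d^{r-1}G_1\|\lesssim \max(1,\|\Lambda^kd_0\mathfrak{f}_{m+1}\|)/e^{a_m}$ (this uses the induction hypothesis (3) to replace $\max_u\|\Lambda^kd_{\phi^m_i(u)}(\mathfrak{f}^m\circ\mathfrak{s})\|$ in the numerator). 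Your statement that the $C^{r-1}$-norm of $t\mapsto\Lambda^kd_th_i$ is ``of order $\|\Lambda^kd_0\mathfrak{f}_m\|$'' omits the factor $\max_u\|\Lambda^kd_{\phi_i^m(u)}(\mathfrak{f}^m\circ\mathfrak{s})\|$, which is exactly why the normalization is indispensable. Second, you defer the verification that (3) is preserved; this is not automatic. The algebraic lemma gives a bound by $\tfrac12$ times the \emph{previous-level} maximum, and to convert this into $\tfrac12$ times the \emph{new} maximum one uses the nonemptiness of $\Ima(\phi_{i,j}^{m+1})\cap C_{\mathfrak{s}}(\mathcal{A}_{m+1})$ to guarantee a point where the one-step Jacobian expansion is at least $e^{a_m-1}$, upgrading the old $\max$ to a $\min$ and then to the new $\max$.

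Finally, for $m=0$ the bound (4) forces $\sharp\mathcal{I}(\mathcal{A}_0)=1$; ``a universal number of affine reparametrizations'' with constants absorbed would overshoot this. The base case relies on the rescaling $\mathfrak{s}=\alpha^{-1}\sigma(\alpha\cdot)$ already having $C^r$-norm and Jacobian-distortion small enough (for $\alpha$ small) that the identity reparametrization suffices.
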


\begin{proof}
We argue by induction on $m$. Assume the family $(\phi^m_i:[0,1]^k\circlearrowleft)_{i\in \mathcal{I}(\mathcal{A}_m)}$ is already built for $\mathcal{A}_m=(a_0,\cdots, a_{m-1})$. We proceed to the inductive step by building the required family of reparametrizations with respect to $\mathcal{A}_{m+1}=(a_0,\cdots, a_{m})$. 
Observe that $\|d^{s+1}\mathfrak f_l\|\leq \alpha^s\|d^{s+1}f\|$ for all $l\in  \mathbb{N}$ and   $1\leq s\leq r-1$.
From the formula for the derivatives of a composition and the induction hypothesis we get therefore for  small enough $\alpha$ (depending only on $\|d^{s+1}f\|$, $s=1,...,r-1$) and for any $\phi=\phi_i^m$ :
\begin{align*}\|d^{r-1}\left(t\mapsto \Lambda^kd_{\phi(t)}(\mathfrak f^{m+1}\circ \mathfrak s)\right)\|&= 
\|d^{r-1}\left(t\mapsto\Lambda^kd_{\mathfrak f^{m}\circ \mathfrak s\circ \phi(t)}\mathfrak f_{m+1}\circ \Lambda^k d_{\phi(t)}(\mathfrak f^{m}\circ \mathfrak s)\right)\|,\\
&\leq A(r,d)\max\left(1,\|\Lambda^kd_{0}\mathfrak f_{m+1}\|\right)\max_t\|\Lambda^kd_{\phi(t)}(\mathfrak f^m\circ \mathfrak s)\|
\end{align*} 
and 
\begin{align*}\|d^{r}\left(\mathfrak f^{m+1}\circ \mathfrak s \circ \phi\right)\|&= 
\|d^{r-1}\left(d_{\mathfrak f^{m}\circ \mathfrak s\circ \phi}\mathfrak f_{m+1}\circ  d_\phi(\mathfrak f^{m}\circ \mathfrak s)\right)\|,\\
& \leq A(r,d)\max\left(1,\|d_{0}\mathfrak f_{m+1}\|\right),
\end{align*} 
for some universal function $A$ in $r$ and $d$. \\

We  use now the following lemma which is a slightly different version of the Main Lemma in \cite{Gr}. 
\begin{lemma}\label{alg}
Let $G_0:[0,1]^e\rightarrow \mathbb{R}^{e'}$ and $G_1:[0,1]^e\rightarrow \mathbb{R}^{e''}$ be respectively $C^r$ and $C^s$ maps. We denote by $B_{e'}$ and $B_{e''}$ the unit Euclidean balls of $\mathbb{R}^{e'}$ and $\mathbb{R}^{e''}$. Then there exists a family $(\psi_j:[0,1]^e\circlearrowleft)_{j\in  \mathcal  J}$ such that :
\begin{itemize}
\item $\bigcup_{j\in  \mathcal J}\Ima(\psi_j)\Supset G_0^{-1}(B_{e'}) \cap G_1^{-1}(B_{e''})$,
\item $\forall j\in  \mathcal J \ \forall k=0,...,r,\ \|d^k\left(G_0\circ\psi_j\right)\|\leq 1$,
\item $\forall j\in  \mathcal J \ \forall k=0,...,s, \ \|d^k\left(G_1\circ\psi_j\right)\|\leq 1/12$,
\item $\sharp  \mathcal J\leq B(r,s,e,e',e'')\times \max\left(1,\|d^rG_0\|^{e/r}, \|d^sG_1\|^{e/s}\right)$  for some universal function $B$.
\end{itemize}
\end{lemma}
The proof follows the same lines, the unique difference being that one applies the Algebraic Lemma  in \cite{Gr} simultaneously  to the interpolating polynomials of $G_0$ and $G_1$ with respective (maybe distinct)  degrees  $r$ and $s$.\\

To conclude the inductive step we apply   Lemma \ref{alg} for every $i\in \mathcal{I}(\mathcal{A}_m)$ with the $C^{r-1}$ map  $G_1:s\mapsto \frac{\Lambda^kd_{\phi_i^m(s)}(\mathfrak f^{m+1}\circ \mathfrak s)}{e^{a_{m}}\max_{u\in [0,1]^k}\|\Lambda^kd_{\phi_i^m(u)}(\mathfrak f^m\circ \mathfrak s)\|}$ and the $C^r$ map $G_0=\mathfrak f^{m+1}\circ \mathfrak s \circ \phi_i^m$ (for any $t\in \Ima(\phi_i^m)\cap  C_\mathfrak{s}(\mathcal{A}_{m+1})$ we have $\|G_1(t)\|\leq 1$).  We let $\psi_j$, $j\in \mathcal{J}=\mathcal{J}(\phi_i^m)$, be the resulting reparametrizations. % As we have  $\|d_{0}\mathfrak f_{n+1}\|=\|d_{ f^{nx}f\|$ and $\|\Lambda^kd_{0}f_{n+1}\|=\|\Lambda^kd_{ f^nx}f\|$, the reparametrizations 
The maps $\phi_{i,j}^{m+1}=\phi_i^m\circ \psi_j$ 
over all $(i,j)\in \mathcal{I}(\mathcal{A}_{m+1}):=\{(i,j), \ i \in \mathcal{I}(\mathcal{A}_m) \text{ and } j\in \mathcal{J}(\phi_i^m)\text{  with }\Ima(\phi_i^m\circ \psi_j)\cap C_{\mathfrak{s}}(\mathcal{A}_{m+1})\neq \emptyset\}$ 
 give the required family of reparametrizations for the $(m+1)^{th}$ step. Let us just check the new reparametrizations $\phi_{i,j}^{m+1}$ satisfies (3) for any  $ s=1,...,r-1$ :
\begin{align*}
\|d^s\left(t\mapsto\Lambda^kd_{\phi_{i,j}^{m+1}(t)}(\mathfrak f^{n+1}\circ \mathfrak s)\right)\|& \leq e^{a_{m}}\max_{u\in [0,1]^k}\|\Lambda^kd_{\phi_i^{m}(u)}(\mathfrak f^m\circ \mathfrak s)\|\|d^s(G_1\circ \psi_j)\|,\\
&\leq \frac{e^{a_{m}-1}}{4}\max_{u\in [0,1]^k}\|\Lambda^kd_{\phi_i^m(u)}(\mathfrak f^m\circ \mathfrak s)\|,\\
&\leq \frac{e^{a_{m}-1}}{2}\min_{u\in [0,1]^k}\|\Lambda^kd_{\phi_i^m(u)}(\mathfrak f^m\circ \mathfrak s)\|.
\end{align*}
Since we have $\Ima(\phi_{i,j}^{m+1}\circ \psi_j)\cap C_\mathfrak{s}(\mathcal{A}_{m+1})\neq \emptyset$ there exists $v\in [0,1]^k$,
with $\frac{\|\Lambda^kd_{\phi_{i,j}^{m+1}(v)}(\mathfrak f^{m+1}\circ \mathfrak s)\|}{\|\Lambda^kd_{\phi_{i,j}^{m+1}(v)}(\mathfrak f^{m}\circ \mathfrak s)\|}\geq e^{a_{m}-1}$ and therefore 
\begin{align*}\|d^s\left(t\mapsto\Lambda^kd_{\phi_{i,j}^{m+1}(t)}(\mathfrak f^{m+1}\circ \mathfrak s)\right)\|&\leq \frac{e^{a_{m}-1}}{2}\|\Lambda^kd_{\phi_{i,j}^{m+1}(v)}(\mathfrak f^{m}\circ \mathfrak s)\|,\\
&\leq \frac{1}{2}\|\Lambda^kd_{\phi_{i,j}^{m+1}(v)}(\mathfrak f^{m+1}\circ \mathfrak s)\|,\\
&\leq \frac{1}{2}\max_{u\in [0,1]^k}\|\Lambda^kd_{\phi_{i,j}^{m+1}(u)}(\mathfrak f^{m+1}\circ \mathfrak s)\|.
\end{align*}
%\text{\footnotemark}
This concludes the proof of Proposition \ref{auto}.
\end{proof}

A sequence $\mathcal{A}_m=(a_0,\cdots, a_{m-1})$ is said \textbf{$A$-admissible} for $A\in \mathbb{R}$ when 
  $$ B^k(\mathcal{A}_m)\cap  \{(y,v)\in \Lambda^k(T\mathbb{R}^d) , \ \|v\|=1 \text{ and }  \| \Lambda^kd_y\mathfrak  f^m(v)\|\geq e^{mA}\}\neq \emptyset.$$
In particular we have then $\sum_{l=0}^{m-1}a_l\geq mA$.

Let  $F$ be the real function $\mathbb{R}^+\ni t\mapsto t \left[ t^{-1}\log(t^{-1})+(1-t^{-1})\log(1-t^{-1})\right]$, in particular  $F(t)\leq t\log 2 $ for all $t$ and $\lim_{t\rightarrow +\infty}\frac{F(t)}{t}=0$.   By a standard combinatorial argument    (see e.g. Lemma 8 in \cite{Bur})  we have :
\begin{lemma}\label{combi}
Let $A\in \mathbb{R}$. Assume $\left|\log^+\|\Lambda^{k}d_0\mathfrak f_l\|-\log^{+}\|\Lambda^{k}d_y\mathfrak f_l\|\right|<1$ % for $k'\in\{1,k\}$ and  
for all $l\in \mathbb{N}$ and $y\in B$. Then the number $k_n$ of $A$-admissible sequences $\mathcal{A}_m$ is bounded from above as follows 
\begin{align*}
\frac{\log k_m}{m}\leq F(\lambda^k(\mathfrak{F}_m)+2-A),
\end{align*}
where  $\lambda^k(\mathfrak{F}_m):=\frac{1}{m}\sum_{l=0}^{m-1}\log^+\|\Lambda^kd_0\mathfrak f_l\|$
\end{lemma}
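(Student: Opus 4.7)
The plan is to turn $A$-admissibility into a linear constraint on an integer vector and then invoke a stars-and-bars count together with the entropy estimate on binomial coefficients. For any $A$-admissible sequence $\mathcal{A}_m$ there exists $(y,v) \in B^k(\mathcal{A}_m)$ with $\|\Lambda^k d_y \mathfrak{f}^m(v)\| \geq e^{mA}$, and the chain rule gives
\[
\sum_{l=0}^{m-1} a_l \;\geq\; \sum_{l=0}^{m-1}\log\|\Lambda^k d_{\mathfrak{f}^l y}\mathfrak{f}_l(v_l)\|
\;=\;\log\|\Lambda^k d_y\mathfrak{f}^m(v)\|\;\geq\;mA,
\]
where the first inequality uses that $a_l$ is the ceiling of the $l$-th log-norm. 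At the same time, since $\|v_l\|=1$ and the distortion hypothesis gives $\log^+\|\Lambda^k d_{\mathfrak{f}^l y}\mathfrak{f}_l\|<\log^+\|\Lambda^k d_0\mathfrak{f}_l\|+1$, each coordinate satisfies $a_l \leq b_l := \lceil\log^+\|\Lambda^k d_0\mathfrak{f}_l\|\rceil+1$.

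Setting $c_l := b_l - a_l \in \mathbb{Z}_{\geq 0}$, the constraint above becomes
\[
\sum_{l=0}^{m-1} c_l \;\leq\; \sum_{l=0}^{m-1} b_l - mA \;\leq\; m\bigl(\lambda^k(\mathfrak{F}_m)+1-A\bigr)=:N.
\]
The number of admissible $\mathcal{A}_m$ is thus bounded by the number of nonnegative integer tuples $(c_l)_{l<m}$ with $\sum_l c_l \leq N$, which by stars-and-bars equals $\binom{N+m}{m}$. Setting $t := (N+m)/m = \lambda^k(\mathfrak{F}_m)+2-A$, the standard entropy bound on binomial coefficients $\log\binom{n}{r}\leq n\,h(r/n)$ with $h(p)=-p\log p-(1-p)\log(1-p)$ yields
\[
\log k_m \;\leq\; \log\binom{N+m}{m}\;\leq\;(N+m)\,h\!\left(\tfrac{m}{N+m}\right)\;=\;mt\,h(1/t)\;=\;m\,F(t),
\]
which gives exactly $\frac{\log k_m}{m}\leq F(\lambda^k(\mathfrak{F}_m)+2-A)$.

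The only delicate point is bookkeeping the off-by-one constants: the ceiling in the definition of $a_l$ and the strict $<1$ in the distortion hypothesis together dictate the choice $b_l = \lceil\log^+\|\Lambda^k d_0\mathfrak{f}_l\|\rceil + 1$, while the extra unit in the argument of $F$ is absorbed automatically by the $+m$ in the numerator of $\binom{N+m}{m}$ (i.e.\ by the relation $t = N/m + 1$). Everything else is a routine estimate, which is why the paper labels it a standard combinatorial argument and refers to \cite{Bur} for the detailed version.
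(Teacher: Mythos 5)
The approach you take — rewrite the constraints as a stars-and-bars count in the slack variables $c_l=b_l-a_l$ and then invoke the binomial entropy estimate $\log\binom{n}{r}\leq n\,h(r/n)$, identifying $F(t)=t\,h(1/t)$ — is exactly the ``standard combinatorial argument'' the paper has in mind (and which it delegates to Lemma~8 of \cite{Bur}); your identification of the off-by-one bookkeeping with the ceiling and the distortion hypothesis is also the right thing to track. However, there is a concrete arithmetic slip in the bound on $\sum_l b_l$ that does not hold, and as a result the constant in your final estimate is not actually established by the argument as written.

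You set $b_l := \lceil\log^+\|\Lambda^k d_0\mathfrak f_l\|\rceil + 1$ (which is indeed the tightest universal upper bound on $a_l$ consistent with the hypotheses), and then claim
\[
\sum_{l=0}^{m-1} b_l - mA \;\leq\; m\bigl(\lambda^k(\mathfrak F_m)+1-A\bigr).
\]
This requires $\sum_l \lceil\log^+\|\Lambda^k d_0\mathfrak f_l\|\rceil \leq \sum_l \log^+\|\Lambda^k d_0\mathfrak f_l\| = m\lambda^k(\mathfrak F_m)$, which is false: the ceiling satisfies $\lceil x\rceil\geq x$, not $\lceil x\rceil\leq x$. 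The correct estimate (using $\lceil x\rceil < x+1$) is
\[
\sum_l c_l \;\leq\; \sum_l b_l - mA \;<\; m\bigl(\lambda^k(\mathfrak F_m)+2-A\bigr),
\]
so $N/m < \lambda^k(\mathfrak F_m)+2-A$, and since $\log\binom{N+m}{m}\leq (N+m)h\bigl(\tfrac{m}{N+m}\bigr)=m\,F\bigl(\tfrac{N}{m}+1\bigr)$ with $F$ increasing, the argument as carried out yields $\tfrac{1}{m}\log k_m \leq F\bigl(\lambda^k(\mathfrak F_m)+3-A\bigr)$, one unit worse than stated. You should either repair the count (if a tighter bound is genuinely available through a more careful use of the admissibility condition) or accept the weaker constant $+3$, noting that this is harmless in the sequel: in the proof of the Reparametrization Lemma the only property of this lemma that is used is the sublinearity of $F$, through a quantity of the form $\sup_{x>T} F(x)/(x-c)$ which tends to $0$ as $T\to\infty$ for any fixed $c$, so an extra additive constant in the argument of $F$ changes nothing. (Two remarks incidental to your proof but worth noting: the paper's $F$ as literally printed lacks the minus signs of the binary entropy $h$, so you are right to interpret it as $F(t)=t\,h(1/t)$; and ``$k_n$'' in the statement is a typo for ``$k_m$''.)
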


We are now in position to prove the Reparametrization Lemma. 

\begin{proof}[Proof of the Reparametrization Lemma]Without loss of generality we can assume $a<-1$. Fix  then $\gamma>0$ and $x\in X$ and take   a positive integer $p$ precised later on.  Let $\mathbb{N}^*\ni n=p(m-1)+q$ with $m,q\in\mathbb{N}^*$ and $0< q\leq p$. As in the previous works \cite{Yom, Gr, Bur} we may replace\footnote{ Of course we only reparametrize  in this a way the subset $\sigma(\alpha[0,1]^k)$. But one can reparametrize similarly  $\sigma(C_\alpha)$ for any subcube $C_\alpha$ of $[0,1]^k$ of size $\alpha$ and we only need $\lceil \alpha^{-1}\rceil ^{d}$ such subcubes to cover $[0,1]^k$.} $\sigma$ by $\mathfrak s=\alpha^{-1}\sigma(\alpha\cdot)$ for $\alpha>0$ and the local dynamic of $f$ around $x$ of time $n$ by the nonautonomous system $\mathfrak{F}_m=(\mathfrak f_l)_{0\leq l < m }$  defined on the unit Euclidean  ball $B$ of $\mathbb{R}^d$ by  $\mathfrak f_l=\alpha^{-1}f^p(f^{pl}x+\alpha\cdot )$ for  $0\leq l< m-1$ and $\mathfrak f_{m-1}=\alpha^{-1}f^q(f^{p(m-1)}x+\alpha\cdot )$. We assume here without loss of generality that  $M$ is the $d$-torus $\mathbb{R}^d/\mathbb{Z}^d$ and $\alpha$ is less than $1$ (in general, without an affine structure, one should conjugate $f$ with the exponential map at $f^lx$  to get a map $\mathfrak f_l$ on $B\subset\mathbb{R}^d$ and take $\alpha$ less than the radius of injectivity of $(M,\|\cdot\|)$. Moreover one has to replace the Euclidean norm by the Riemanian norms along the orbit of $x$, in the nonautonomous system). 

We may take $\alpha>0$ so small that $\left|\log^+\|\Lambda^{k}d_0\mathfrak f_l\|-\log^{+}\|\Lambda^{k}d_y\mathfrak f_l\|\right|<1$   for all $0\leq l<m$ and $y\in B$. 
We have $p/2\leq n/m(\leq p)$ once $m\geq 2$. Therefore in this case a $an/m$-admissible sequence $\mathcal{A}_m$ is $ap/2$-admissible. It follows then from  Lemma \ref{combi} that the number $k_m$ of \textit{$an/m$-admissible} sequences $\mathcal{A}_m$ satisfies 
\begin{align*}
\frac{\log k_m}{m}\leq F(\lambda^k(\mathfrak{F}_m)+2-ap/2),
\end{align*}

Moreover  we have $$B_f(x,n,\alpha)\subset B_{\mathfrak{F}_m}$$ and 
for all $t\in \alpha [0,1]^k$ 
$$\frac{\|\Lambda^kd_t(f^n\circ\sigma)\|}{\|\Lambda^kd_t\sigma\|}=\frac{\|\Lambda^kd_{\alpha^{-1} t}(\mathfrak f^m\circ\mathfrak s)\|}{\|\Lambda^kd_{\alpha^{-1}t}\mathfrak s\|}.$$
Therefore we get 
\begin{multline*}\bigcup\{\alpha C_{\mathfrak{s}}(\mathcal{A}_m), \ \mathcal{A}_m \ an/m\text{-admissible }\} \supset \\  \left\{t\in \alpha[0,1]^k,  \ \frac{\|\Lambda^kd_t(f^n\circ\sigma)\|}{\|\Lambda^kd_t\sigma\|}\geq e^{na} \text{ and } \sigma(t)\in B(x,n,\alpha)\right\}.
\end{multline*}

%Therefore it is enough to reparametrize appropriately $C_{\mathfrak{s}}(\mathcal{A}_m)$
%for a $an/m$-admissible sequence $\mathcal{A}_m$. 

%We aim now to control  the cardinality of $\mathcal{I}(\mathcal{A}_m)$. %Note that we  have  $\|d_{0}\mathfrak f_{l}\|=\|d_{f^{l}x}f\|$ and $\|\Lambda^kd_{0}f_{l}\|=\|\Lambda^kd_{ f^lx}f\|$ for all $l$. 
For $\gamma>0$ we take $r$ such that 
\[\max(1,\|df\|^{k/r})\times \left(\frac{\max\left(1,\|\Lambda^kd f\|\right)}{e^{a}}\right)^{\frac{k}{r-1}}<e^{\gamma/6}.\]
 We consider then  an integer $p$ so large that   
$$ p>\frac{6\left(2k+\log C(r,d)\right)}{\gamma}$$
 and 
$$  \sup_{x>p\gamma/3\log 2}\frac{F(x)}{x-2}<\frac{\gamma}{6k\max(\log \|df\|,|a|)}.$$
This last constraint allows to control the number $k_m$ of $an/m$-admissible sequences  $\mathcal{A}_m$  (observe $\lambda^k(\mathfrak{F}_m)\leq pk\log^+ \|df\|$):
\begin{eqnarray*}\frac{\log k_m}{m}& \leq & F\left(\lambda^k\left(\mathfrak{F}_m\right)+2-ap/2\right),\\
&\leq & \max\left(\left(\lambda^k(\mathfrak{F}_m)-ap/2\right)\sup_{x>p\gamma / 3\log 2}\frac{F(x)}{x-2}, \sup_{x\leq p\gamma / 3\log 2}F(x)\right),\\
& \leq & \max\left(\frac{ p\gamma\left( k\log^+ \|df\|+|a|/2\right)}{6k\max(\log^+ \|df\|,|a|)}, p\gamma/3\right),\\
\frac{\log k_m}{m} &\leq & p\gamma /3.\end{eqnarray*}

Moreover,  for any $an/m$-admissible sequence $\mathcal{A}_m=(a_0,\cdots,a_{m-1})$ we have  $\frac{\max(1,\|\Lambda^kd_0\mathfrak{f}_l\|)}{e^{a_l}}\geq 1/e^2$ for any $0\leq l<m$ and therefore

\begin{align*}
&C(r,d)^m\prod_{l=0}^{m-1}\max\left(1,\|d_{ 0}\mathfrak f_l\|^{k/r},\left(\frac{\max(1,\|\Lambda^kd_{ 0}\mathfrak f_l\|)}{e^{a_l}}\right)^{\frac{k}{r-1}}\right)\\
&\leq (e^{2k}C(r,d))^m \prod_{l=0}^{m-1}\max\left(1,\|d_{ 0}\mathfrak f_l\|^{k/r}\right)  \times \prod_{l=0}^{m-1}\left(\frac{\max(1,\|\Lambda^kd_{ 0}\mathfrak f_l\|)}{e^{a_l}}\right)^{\frac{k}{r-1}},\\ 
&\leq  (e^{2k}C(r,d))^{m}\left( \max(1,\|df\|^{k/r})\times \left(\frac{\max(1,\|\Lambda^kd f\|)}{e^{a}}\right)^{\frac{k}{r-1}}\right)^{n},\\
&\leq  e^{2k}C(r,d)e^{\gamma n/3},
\end{align*}
where the last inequality follows from  $m\leq 1 + \frac{n}{p}\leq 1+\frac{\gamma n}{6(2k+\log C(r,d))}$).

The reparametrizations $(\phi_i^m)_{i\in \mathcal{I}(\mathcal{A}_m)}$ built  with 
respect to $\mathfrak{F}_p$ over all $an/m$-admissible sequences $\mathcal{A}_m$ then satisfies the conclusion of the Reparametrization Lemma after a rescaling of size $\alpha$:

\begin{itemize}
\item
 \begin{flalign*}
 &\bigcup \left\{\alpha \phi_i^{m}([0,1]^k), \  i \in \mathcal{I}(\mathcal{A}_m) \text{ and } \mathcal{A}_m \ an/m\text{-admissible }\right\} \\
&\supset \bigcup\{\alpha C_{\mathfrak{s}}(\mathcal{A}_m), \ \mathcal{A}_m \ an/m\text{-admissible }\}\\
&\supset   \left\{t\in \alpha [0,1]^k,  \ \frac{\|\Lambda^kd_t(f^n\circ\sigma)\|}{\|\Lambda^kd_t\sigma\|}\geq e^{na} \text{ and } \sigma(t)\in B(x,n,\alpha)\right\}.
 \end{flalign*}
 By taking  a subfamily  we may assume the image of each reparametrization has a non empty intersection   with this last set.
\item $\forall \mathcal{A}_m, \  i \in \mathcal{I}(\mathcal{A}_m),$
\begin{eqnarray*}
\| d(f^n\circ \sigma\circ\alpha  \phi_i^{m}) \| & = &  \alpha \| d \left( \mathfrak f^m\circ \mathfrak s \circ  \phi_i^m\right)\|,\\
&\leq &\alpha<1.
\end{eqnarray*}
\item $\forall \mathcal{A}_m, \  i \in \mathcal{I}(\mathcal{A}_m),$ we get from Lemma \ref{auto}  (with the notation  $\| \Lambda^kd_{\phi}g\|:=\max_{u\in [0,1]^k}\| \Lambda^kd_{\phi(u)}g\|$ for maps $\phi:[0,1]^k\circlearrowleft$ and $g:[0,1]^k\rightarrow \mathbb{R}^d$ or $M$):
\begin{align*}
\|d\left(t\mapsto\Lambda^kd_{\alpha \phi_{i}^m(t)}(  f^n\circ \sigma)\right)\|  &= \|d\left(t\mapsto\Lambda^kd_{ \phi_{i}^m(t)}(  \mathfrak f^m\circ \mathfrak s)\right)\|,\\
&\leq \frac{1}{2}\|\Lambda^kd_{\phi^m_i}( \mathfrak f^m\circ \mathfrak s)\|,\\
&\leq \frac{1}{2}\|\Lambda^kd_{\alpha\phi^m_i}( f^n\circ \sigma)\|.
\end{align*}

Then it follows from  the mean value inequality : 
$$\forall t,t'\in [0,1]^k, \ \|\Lambda^kd_{\alpha\phi^m_i(t)}( f^n\circ \sigma)-\Lambda^kd_{\alpha\phi^m_i(t')}(  f^n\circ \sigma)\|\leq \frac{1}{2}\|\Lambda^kd_{\alpha\phi^m_i}( f^n\circ  \sigma)\|$$ and  by the triangular inequality 
$$\|\Lambda^kd_{\alpha\phi^m_i(t)}( f^m\circ \sigma)\| \geq \|\Lambda^kd_{\alpha\phi^m_i(t')}(  f^m\circ \sigma )\|-\frac{1}{2}\|\Lambda^kd_{\alpha\phi^m_i}( f^m\circ  \sigma)\|.$$ 
Finally we get by  taking the maximum over $t'\in [0,1]^k$ :
$$\|\Lambda^kd_{\alpha\phi^m_i(t)}( f^m\circ \sigma)\| \geq \frac{1}{2}\|\Lambda^kd_{\alpha\phi^m_i}( f^m\circ  \sigma)\|.$$
\item \begin{align*}
&\sharp   \{\phi^m_i, \  i \in \mathcal{I}(\mathcal{A}_m) \text{ and } \mathcal{A}_m \  an/m- \text{admissible}\} \\ 
&\leq  \sum_{\mathcal{A}_m \  an/m- \text{admissible}}\sharp \mathcal{I}(\mathcal{A}_m),\\
&\leq   e^{2k}C(r,d)re^{\gamma n/3} k_m,\\
&\leq e^{2k}C(r,d)re^{\gamma n/3}e^{\gamma mp/3},\\
& \leq e^{\gamma n} \text{ for $n$ large enough (with $p$ staying fixed).}
\end{align*} 
\end{itemize}

\end{proof}

\begin{rem}In the proof of the Main Theorem below, we  will only need to apply the Reparametrization Lemma for $a>0$. \end{rem}

\section{Proof of the Main Theorem}
 %For $a<\overline{\Sigma\chi^+}$  we let $\mathcal{PL}_\chi=\mathcal{PL}\left(\Leb_{\{\Sigma\chi^+(x)\geq a\}}\right)$ be the set of physical-like measures with respect to  the Lebesgue measure induced on the set $\{\Sigma\chi^+(x)\geq a\}$. 

For   $1\leq k\leq d$ and  $ a <\overline{\chi^k}$  we let $PL^k_a=PL(\Leb_{\{\chi^k>a\}})$ be the set of points $x$ in $M$ such that any measure in $pw(x)$ is physical-like with respect to the Lebesgue measure induced  on $\{\chi^k>a\}$.% and $PL^*_a$ be the set of density points of $PL_a$. 
\begin{prop}\label{maine}For any   $1\leq k\leq d$ and  $  a <\overline{\chi^k}$  we have 
\[\forall x\in  PL^k_a \ \exists \mu_x\in pw(x), \ \ h(\mu_x)\geq a.\]
\end{prop}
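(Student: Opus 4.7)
The plan is to argue by contradiction: assuming the conclusion fails, I will construct via the Reparametrization Lemma and Lemma \ref{fdf} an invariant measure $\nu$ of entropy $\geq a-\gamma$, and then disintegrate $\nu$ over empirical-measure limits to reach a contradiction. Suppose there is $x_0\in PL^k_a$ and $\epsilon>0$ with $\sup_{\mu\in pw(x_0)}h(\mu)<a-2\epsilon$. Upper semi-continuity of entropy for $C^\infty$ maps \cite{new} makes $K\mapsto \sup_{\mu\in K}h(\mu)$ USC on $(\mathcal{KM}(M),\mathfrak{d}^H)$ (since $\{h<\alpha\}$ is open and the lower Vietoris topology is generated by $\{K:K\subset U\}$ for $U$ open), so some Hausdorff open neighborhood $\mathcal{V}$ of $pw(x_0)$ satisfies $\sup_{\mu\in K}h(\mu)<a-\epsilon$ for every $K\in\mathcal{V}$. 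Since $pw(x_0)\in\overline{\Ima}_{\Leb_{\{\chi^k>a\}}}(pw)$, the set $A:=\{y\in\{\chi^k>a\}:pw(y)\in\mathcal{V}\}$ has positive Lebesgue measure.

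Apply Lemma \ref{fol} to (a positive-measure subset of) $A$ to obtain a compact $F\subset A$ and a foliation box $U$ for a $C^\infty$ $k$-foliation $\mathcal{F}$ with $\Leb(U\cap F)>0$ and $\chi^k(y,T_y\mathcal{F})>a$ on $U\cap F$. Parametrize a leaf by $\sigma:[0,1]^k\to M$ with $\|d\sigma\|\le 1$ such that $T:=\sigma^{-1}(F)$ has positive Lebesgue measure and $\|\Lambda^k d_t\sigma\|\geq c>0$ on $T$ (by shrinking). By the definition of $\chi^k$, for each $t\in T$ the condition $\|\Lambda^k d_t(f^n\circ\sigma)\|/\|\Lambda^k d_t\sigma\|\geq e^{na}$ holds for infinitely many $n$, so Egorov yields $T'\subset T$ of positive measure and a subsequence $(n_j)$ for which this condition holds uniformly on $T'$. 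Fix $\gamma\in(0,\epsilon)$. For every $y\in M$ the Reparametrization Lemma produces at most $e^{\gamma n_j}$ reparametrizations $\theta_i^{n_j}$ covering $T'\cap\sigma^{-1}(B_f(y,n_j,\alpha))$ with $\|d(f^{n_j}\circ\sigma\circ\theta_i^{n_j})\|\leq 1$. Comparing the resulting upper bound on the $k$-volume of $f^{n_j}\circ\sigma(T'\cap\sigma^{-1}B_f(y,n_j,\alpha))$ (at most $C e^{\gamma n_j}$) with the lower bound $\|\Lambda^k d_t(f^{n_j}\circ\sigma)\|\geq ce^{n_j a}$ on $T'$ yields
\[
|T'\cap\sigma^{-1}(B_f(y,n_j,\alpha))|\leq C'e^{(\gamma-a)n_j}.
\]

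Set $\mu:=\sigma_*(\Leb|_{T'}/|T'|)$ and pick a finite partition $P$ of $M$ of diameter $<\alpha/2$ with $\nu$-null boundary for the eventual limit $\nu$. Each atom of $P^{n_j}$ sits in a dynamical ball, so $\mu(A')\leq C''e^{(\gamma-a)n_j}$ for every $A'\in P^{n_j}$ and hence $H_\mu(P^{n_j})\geq(a-\gamma)n_j-O(1)$. Lemma \ref{fdf} applied to $\nu_{n_j}:=\frac{1}{n_j}\sum_{l<n_j}f^l_*\mu$ together with a weak-$*$ limit $\nu$ of $(\nu_{n_j})$ gives $h(\nu)\geq h_\nu(f,P)\geq a-\gamma$. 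To obtain the contradiction, lift to $M\times\mathcal{M}(M)$: let $\rho_{n_j}:=(\mathrm{id}\times\tau_{n_j})_*\mu$ with $\tau_{n_j}(y):=\mu^y_{n_j}$, and pass to a subsequential weak limit $\rho$; its marginals are $\mu$ and $\nu$. A Portmanteau argument applied to the closed sets $S_{N,\delta}:=\{(y,\sigma):\mathfrak{d}(\sigma,\mu^y_n)\geq\delta\ \forall n\geq N\}$, which have $\rho_{n_j}$-mass $0$ for $n_j\geq N$, forces the disintegration $\rho=\int\rho_y\,d\mu(y)$ to have $\rho_y$ supported on $pw(y)$ for $\mu$-a.e.\ $y$. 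Since entropy is affine on $\mathcal{M}(M,f)$, $\nu=\int\bar{\sigma}_y\,d\mu(y)$ with barycenters $\bar{\sigma}_y:=\int\sigma\,d\rho_y(\sigma)$, and $\mu$ is supported on $A$,
\[
h(\nu)=\int\!\!\int h(\sigma)\,d\rho_y(\sigma)\,d\mu(y)\leq a-\epsilon,
\]
contradicting $h(\nu)\geq a-\gamma>a-\epsilon$.

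The main obstacle is the disintegration step ensuring $\rho_y\subset pw(y)$ almost surely: the volume estimate via reparametrizations and the Misiurewicz entropy bound are Yomdin-style and essentially mechanical, but the product-space Portmanteau argument is what couples the constructed measure $\nu$ back to the physical-like hypothesis on $pw(x_0)$ and makes the contradiction work.
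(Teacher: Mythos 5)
Your overall toolkit matches the paper's (Lemma~\ref{fol}, the Reparametrization Lemma, Lemma~\ref{fdf}, upper semicontinuity of entropy), but you run the argument by contradiction with a fixed reference measure $\mu$ on a fixed set $T'$, whereas the paper runs it directly with a sequence of measures $\mu_n$ carried by shrinking sets $B_n^{V'}$. Two steps in your version do not go through.

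First, the Egorov step is incorrect. From $\chi^k(y,T_y\mathcal{F})>a$ you only get, for each $t\in T$, that $\|\Lambda^k d_t(f^n\circ\sigma)\|/\|\Lambda^k d_t\sigma\|\geq e^{na}$ holds for \emph{infinitely many} $n$, with the set of good $n$ depending on $t$. Egorov upgrades almost-everywhere \emph{convergence} to uniform convergence off a small set; it says nothing about extracting a single subsequence $(n_j)$ that works simultaneously on a positive-measure set. In fact this can fail: if $A_n\subset T$ are, say, independent events of measure $1/2$, then almost every $t$ lies in infinitely many $A_n$, yet $|A_{n_1}\cap\cdots\cap A_{n_k}|\to 0$ for every infinite subsequence. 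The paper avoids this by a Borel--Cantelli argument: it lets the good set $B_n^{V'}$ depend on $n$ and only requires $\Leb_L(B_n^{V'})\geq e^{-n\gamma}$ along a subsequence, then normalizes $\mu_n:=\Leb_L|_{B_n^{V'}}/\Leb_L(B_n^{V'})$. Lemma~\ref{fdf} is designed to handle precisely such an $n$-dependent $\mu_n$; your attempt to freeze $\mu$ is what forces the unavailable Egorov conclusion.

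Second, the Portmanteau step does not yield the disintegration claim. For a closed set $C$, Portmanteau gives $\limsup_j\rho_{n_j}(C)\leq\rho(C)$; applied to $S_{N,\delta}$ with $\rho_{n_j}(S_{N,\delta})=0$ this only gives $0\leq\rho(S_{N,\delta})$, which is vacuous. To conclude $\rho(S_{N,\delta})=0$ you would need the reverse inequality, which holds for \emph{open} sets, and $S_{N,\delta}$ is not open. So the argument as written does not show $\rho_y$ is supported on $pw(y)$, and without that you cannot bound $h(\nu)$ from above by $a-\epsilon$. The paper sidesteps the issue entirely: it builds $\mathfrak{d}(\mu_n^y,V)<\epsilon/2$ into the \emph{definition} of $B_n^{V'}$, so that $\nu_n=\int\mu_n^y\,d\mu_n(y)$ is automatically $\epsilon$-close to $pw(x)$ by convexity of $\mathfrak{d}$, with no lift to $M\times\mathcal{M}(M)$ and no disintegration needed. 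This is the structural difference to keep in mind: the paper's choice of $B_n^V$ couples both constraints (derivative growth \emph{and} proximity of $\mu_n^y$ to $pw(x)$) at the level of the selected set, whereas your contradiction route tries to recover the proximity a posteriori and runs into the open/closed asymmetry of weak convergence.
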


We first prove the Main Theorem assuming the above Proposition \ref{maine}. Let $A$ be a countable and dense subset of $\mathbb{R}^+$. The countable  intersection $E$ over $1\leq k\leq d$ and $a_k\in A$ of the sets $PL^k_{a_k}\cup \{\chi^k\leq a_k\}$ has full Lebesgue measure. 
Fix $x\in E$ and let us show that there exists $\mu_x\in pw(x)$ with $ h(\mu_x)\geq \Sigma\chi^+(x)$. We may assume $\Sigma\chi^+(x)>0$. Take $k$ with $\chi^k(x)=\Sigma\chi^+(x)$. For any $a_k\in A $ with $a_k<\chi^k(x)$ we have $h(\mu_x)\geq a_k$  for some $\mu_x\in pw(x)$,  according to Proposition \ref{maine}. Since $A$ is dense in $\mathbb{R}^+$ and the metric entropy is upper semicontinuous we conclude that  \[\sup_{\mu_x\in pw(x)}h(\mu_x)=\max_{\mu_x\in pw(x)}h(\mu_x)\geq 
\Sigma\chi^+(x).\]  

\begin{proof}[Proof of Proposition \ref{maine}]
Fix $x$ in $PL_a^k$. For all $\epsilon>0$ the set $E=\{y, \ \chi^k(y)>a \text{ and } \mathfrak{d}^H\left(pw(y),pw(x)\right)<\epsilon/2\}$ has positive Lebesgue measure. Let $F$ be the subset of $E$ and let $U$ be the $\mathcal{F}$-foliation box given both by Lemma \ref{fol}. Fix $\gamma,\epsilon>0$.  As the foliation is smooth, there is by Fubini's theorem a leaf  $L$ of $\mathcal{F}$ intersecting $F$ in a set of positive Lebesgue measure (for the Lebesgue measure $\Leb_L$ induced on the smooth leaf $L$). Let $\mathcal{V}$ be a finite cover of $pw(x)$ by balls $V$ of radius $\frac{\epsilon}{2}$ centered at $x_V\in pw(x).$ We put for all integers $n$ and for all $V\in \mathcal{V}$
\begin{eqnarray*}B_n^V(=B_n^V(x)):=&\{y\in L\cap F\subset U, \ \|\Lambda^k df^n(T_y\mathcal{F})\| \geq e^{na} &  \\
  & \text{ and } \mathfrak{d}(\mu_n^y,V)<\epsilon/2 \}.& \end{eqnarray*}
By Borel-Cantelli Lemma we have $Leb_L(B_n^{V'})\geq e^{-n\gamma}$ for some $V'\in \mathcal{V}$ and  for  $n$ in an infinite subset $I_{\epsilon,\gamma}$ of positive integers. Indeed ifnot we should have $\Leb_L(\limsup_nB_n^V)=0$ for all $V\in \mathcal{V}$, but as by Lemma 4 we have  $L\cap F\subset \{y, \ \chi^k(y,T_y\mathcal{F})>a \text{ and } \mathfrak{d}^H\left(pw(y),pw(x)\right)<\epsilon/2\}\subset \bigcup_{V\in \mathcal{V}}\limsup_n B_n^V$, 
it would contradict $\Leb_L(F)>0$. For $n\in I_{\epsilon,\gamma}$  we let  $\mu_n$ be the probability measure induced on $B_n^{V'}$ by the Lebesgue measure $\Leb_L$ on $L$ and $\nu_n:=\frac{1}{n}\sum_{l=0}^{n-1}f^l\mu_n=\int \mu_n^y\, d\mu_n(y)$.  By convexity of the metric $\mathfrak d$ we have  $\mathfrak d (\nu_n, pw(x))\leq \mathfrak d(\nu_n, x_{V'})<\epsilon$. 

\begin{lemma}\label{entb}
With the above notations, any weak limit $\nu=\nu_{\epsilon,\gamma}^{a,k}$ of $(\nu_n)_{n\in I_{\epsilon,\gamma}}$,  when $n\in I_{\epsilon,\gamma}$ goes to infinity, is $\epsilon$-close to $pw(x)$ and satisfies
\[h(\nu)\geq a-2\gamma.\]
\end{lemma}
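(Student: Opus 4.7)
The ``$\epsilon$-close'' assertion is immediate from the paragraph preceding the lemma: for $y\in B_n^{V'}$ the triangle inequality gives $\mathfrak{d}(\mu_n^y,x_{V'})<\epsilon$, convexity of $\mathfrak{d}$ yields $\mathfrak{d}(\nu_n,x_{V'})<\epsilon$, and passing to the weak-$*$ limit produces $\mathfrak{d}(\nu,pw(x))\leq\mathfrak{d}(\nu,x_{V'})\leq\epsilon$.

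For the entropy bound my plan is to apply Misiurewicz's Lemma \ref{fdf}. Fix a finite Borel partition $P$ of $M$ whose atoms have diameter less than the constant $\alpha$ of the Reparametrization Lemma, and such that $\nu(\partial P)=0$ (standard; $\nu$ is $f$-invariant as a weak-$*$ limit of the $\nu_n$). The target is the static-entropy bound
\[ H_{\mu_n}(P^n)\geq n(a-2\gamma)-O(1)\quad\text{for } n\in I_{\epsilon,\gamma}\text{ large,} \]
from which Lemma \ref{fdf} yields $\frac{1}{m}H_{\nu_n}(P^m)\geq (a-2\gamma)-o_n(1)$; letting first $n\to\infty$ along $I_{\epsilon,\gamma}$ (the condition $\nu(\partial P^m)=0$ passes the static entropy through the weak-$*$ limit) and then $m\to\infty$ gives $h(\nu)\geq h_\nu(f,P)\geq a-2\gamma$. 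Via the elementary inequality $H_{\mu_n}(P^n)\geq -\log\max_{A\in P^n}\mu_n(A)$, it suffices to prove $\mu_n(A)\leq Ce^{-n(a-2\gamma)}$ uniformly over $P^n$-atoms $A$ meeting $B_n^{V'}$.

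To estimate $\mu_n(A)$, cover a compact neighborhood of $L\cap F$ in $L$ by finitely many $C^\infty$ parametrizations $\sigma_j:[0,1]^k\to L$ with $\|d\sigma_j\|\leq 1$ and $c_0\leq \|\Lambda^kd_t\sigma_j\|\leq 1$. Fix any $y_A\in A\cap B_n^{V'}$; since the atoms of $P$ have diameter less than $\alpha$, one has $A\subset B(y_A,n,\alpha)$. For each $\sigma_j$ apply the Reparametrization Lemma at $y_A$ with parameters $a$ and $\gamma$ to obtain, for all $n$ large, reparametrizations $(\theta_{i,j}^n)_{i\in I_{n,j}}$ with $\sharp I_{n,j}\leq e^{n\gamma}$. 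Since $T_{\sigma_j(t)}\mathcal{F}$ is a unit vector of the one-dimensional space $\Lambda^k T_{\sigma_j(t)}\mathcal{F}$, the chain rule gives
\[ \|\Lambda^kd_t(f^n\circ\sigma_j)\|=\|\Lambda^kd_{\sigma_j(t)}f^n(T_{\sigma_j(t)}\mathcal{F})\|\cdot\|\Lambda^kd_t\sigma_j\|, \]
so the expansion condition defining $B_n^{V'}$ matches precisely the set reparametrized by the lemma, and the images $\Ima(\theta_{i,j}^n)$ cover $\sigma_j^{-1}(A\cap L\cap B_n^{V'})$.

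The central geometric estimate, and the main technical point, is
\[ \Leb_L\bigl(\sigma_j(\Ima(\theta_{i,j}^n))\bigr)\leq C_1 e^{-na}. \]
By the third bullet of the Reparametrization Lemma together with the fact that $\Ima(\theta_{i,j}^n)$ meets the expansion set, $\|\Lambda^kd_t(f^n\circ\sigma_j)\|\geq (c_0/2)e^{na}$ throughout $\Ima(\theta_{i,j}^n)$; the second bullet yields $\|\Lambda^kd_s(f^n\circ\sigma_j\circ\theta_{i,j}^n)\|\leq 1$. Since $\theta_{i,j}^n$ acts between $k$-dimensional cubes, the chain rule $\Lambda^kd_s(f^n\circ\sigma_j\circ\theta_{i,j}^n)=\Lambda^kd_{\theta_{i,j}^n(s)}(f^n\circ\sigma_j)\cdot\det(d_s\theta_{i,j}^n)$ forces $|\det d_s\theta_{i,j}^n|\leq (2/c_0)e^{-na}$, whence $\Leb(\Ima(\theta_{i,j}^n))\leq (2/c_0)e^{-na}$, and the displayed estimate follows using $\|\Lambda^kd\sigma_j\|\leq 1$. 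Summing over $i\in I_{n,j}$ and the finitely many $j$ gives $\Leb_L(A\cap B_n^{V'})\leq C_2 e^{-n(a-\gamma)}$, and dividing by $\Leb_L(B_n^{V'})\geq e^{-n\gamma}$ produces the required $\mu_n(A)\leq C_2 e^{-n(a-2\gamma)}$.
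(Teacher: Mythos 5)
Your proof is correct and follows essentially the same route as the paper's: choose a partition of diameter $<\alpha$ with $\nu$-null boundary, invoke Misiurewicz's Lemma \ref{fdf}, bound $\mu_n(P^n_y)$ by applying the Reparametrization Lemma and using $\Leb_L(B_n^{V'})\geq e^{-n\gamma}$, then pass to the limit. The only cosmetic difference is bookkeeping in the volume estimate: you bound $|\det d\theta|$ directly (using a lower bound $c_0$ on $\|\Lambda^k d\sigma_j\|$), while the paper bounds $\Leb((f^n\circ\sigma\circ\theta)([0,1]^k))$ by a universal constant and relates $\|\Lambda^k d_{\theta(t)}\sigma\|$ to $\|\Lambda^k d_{\theta(t)}(f^n\circ\sigma)\|$; the two computations are equivalent.
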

We postpone the proof of Lemma \ref{entb}.
To conclude the proof of Proposition \ref{maine}   (admitting Lemma \ref{entb}) we consider a weak-limit $\mu$ of $\nu_{\epsilon,\gamma}^{a,k}$ when $\epsilon$ and $\gamma$ both go to zero. Clearly $\mu\in pw(x)$ and by upper semicontinuity of the metric entropy we get $h(\mu)\geq a$.
\end{proof}

\begin{proof}[Proof of Lemma \ref{entb}]
Let $\alpha$ be the scale given by the Reparametrization Lemma with respect to $\gamma$, $k$ and $a$. We consider a partition $P$ of $M$ with diameter less than $\alpha$. By standard arguments we may assume the boundary of $P$ has zero $\nu$-measure ; in particular the static entropy $\mu\mapsto H_{\mu}(P^m)$ is a continuous function for any $m$ at $\nu$.  By Lemma \ref{fdf}
 \[\forall m, \ \frac{1}{m}H_{\nu_n}(P^m)\geq \frac{1}{n}\left(H_{\mu_n}(P^n)-3m\log\sharp P\right).\]
By taking the limit when $n\in I_{\epsilon,\gamma}$ goes to infinity we get
\[\frac{1}{m}H_{\nu}(P^m)\geq \liminf_{n\in I_{\epsilon,\gamma}}\frac{1}{n}H_{\mu_n}(P^n).\]
Let $P^n_y$  being the element of the partition $P^n$ containing $y\in M$. Then we have 
 \[H_{\mu_n}(P^n)=\int -\log\mu_n(P^n_y)d\mu_n(y).\]
  We apply the Reparametrization Lemma at a given  point $y$  to a $C^\infty$ map $\sigma:[0,1]^k\rightarrow M$ parametrizing the leaf $L$. By taking the foliation box $U$ small enough we can assume $\|d\sigma\|\leq 1$ and $\Lambda^kd_t\sigma\neq 0$ for all $t\in [0,1]^k$. For $n$ large enough we let $\theta$ be the 
 resulting reparametrizations. The set $P^n_y\cap B_n^{V'}(x)\subset B(y,n,\alpha)\cap  B_n^{V'}(x)$ is covered by the images of the $\theta$'s. The Lebesgue measure of each $f^n\circ \sigma\circ
 \theta$ is bounded  from above by a universal constant $C$ according to the second item of the Reparametrization Lemma. From the first item and  the third item 
we get $\|\Lambda^k d_ {\theta(t)} (f^n\circ\sigma)\|\geq \|\Lambda^k d_{\theta(t)}\sigma\|e^{na}/2$ for any $t\in [0,1]^k$. Together  with the upperbound on the 
 number of reparametrizations given in  the last item we have for $n$ large enough (independently of $y\in M$) : \begin{align*}
 \Leb_L(P_y^n\cap B_n^{V'}(x))&\leq \sum_\theta \Leb((\sigma\circ \theta)([0,1^k])),\\
 &\leq \sum_\theta \int_{[0,1]^k} \|\Lambda^k d_{\theta(t)}\sigma\|\|\Lambda^k d_{t}\theta\|\, dt ,\\
 &\leq \sum_\theta  2e^{-na}\int_{[0,1]^k}  \|\Lambda^k d_{\theta(t)}(f^n\circ\sigma)\|\|\Lambda^k d_{t}\theta\|\, dt ,\\
 & \leq \sum_\theta  2e^{-na}\Leb((f^n\circ\sigma\circ \theta)([0,1^k])),\\
 &\leq 2Ce^{-na}\sharp\{\theta\},\\
  \Leb_L(P_y^n\cap B_n^{V'}(x))& \leq 2Ce^{-na}\times e^{\gamma n}.
\end{align*}
 But  for $n\in I_{\epsilon,\gamma}$  we have also $\Leb_L(B_n^{V'}(x))\geq e^{-n\gamma}$ so that we finally get for large enough $n\in I_{\epsilon,\gamma}$ and for all $y\in M$
 \[\mu_n(P^n_y)\leq 2Ce^{-na}\times e^{2\gamma n},\]
 \[H_{\mu_n}(P^n)\geq (a-2\gamma)n-\log (2C)\]
 and for all $m$ \[\frac{1}{m}H_\nu(P^m)\geq \liminf_{n\in I_{\epsilon,\gamma}}\frac{1}{n}H_{\mu_n}(P^n)\geq a-2\gamma.\]
By taking the limit in $m$ we conclude \[h(\nu)\geq a-2\gamma.\]
\end{proof}

\vspace{1,5cm}

\appendix
\section{Counter-example for $C^r$ interval maps for any finite $r$ }

For any positive integer $r$  we give an example of a $C^r$ (but not $C^{r+1}$) interval map $h:[0,3/2]\circlearrowleft$ such that 
for $x$ in a positive Lebesgue measure set the following properties hold: 
\begin{enumerate}
\item the empirical measures $(\mu_n^x)_n$ are converging  to the Dirac measure at a fixed point (therefore with zero entropy),
\item the Lyapunov exponent at $x$ satisfies $\chi(x)=\frac{\log \|h'\|_\infty}{r}>0$. 
\end{enumerate}
Consequently the Main Theorem does not hold true in finite smoothness. \\

\underline{\textit{Step 1:}} 
Let $\lambda>1$. We first consider a $C^r$ (even $C^\infty$) interval map $f:[0,3/2]\circlearrowleft$ with the following properties 
\begin{itemize}\item $f(0)=f(1)=0$,
\item $f$ has a tangency of order $r$ at $1$, i.e. $f^{(k)}(1)=0$ for $k=1,...,r$,
\item $f$ is affine with a slope equal to $\lambda=\|f'\|_{\infty}$ on the interval $[0, 1/\lambda]$.\\
\end{itemize}

\underline{\textit{Step 2:}}
After a small $C^\infty$ perturbation  of $f$ around $1$ we may build  a new map $g$ such that 
for some $n_0$ and $n\geq n_0$,  $g^k(1-1/n)$ lies in  $[0, 1/\lambda]$ for $k=1,...,r^n-1$ and  
$g^{r^{n}}(1-1/n)=1-1/n+1$. Indeed these conditions require  $g(1-1/n)=(1-1/n+1)\lambda^{-r^n+1}=o(1/n^r)$, so that one can choose $g$  arbitrarily $C^\infty$ closed to $f$ by taking $n_0$ large enough.  For the interval map $g$, the empirical measures at $1-1/n$ are converging to the Dirac measure at the fixed point $0$.  We may also assume $g$ is constant on $J_n:=[1-1/n, 1-1/n-1/2n^2]$ for $n\geq n_0$.
\begin{figure}[!ht]
%\vspace{-0,5cm}
\hspace{5cm}
\includegraphics[scale=0.2]{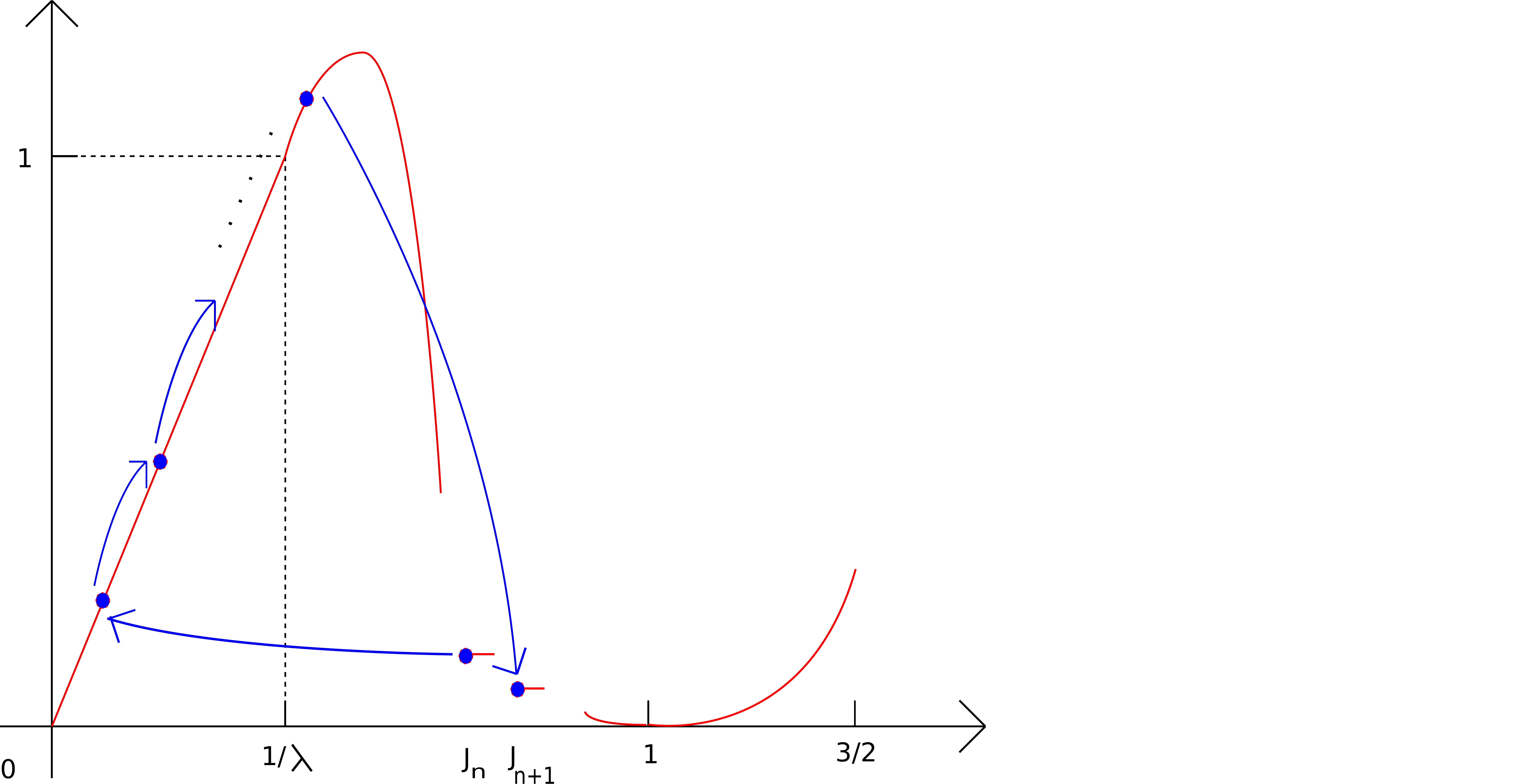}
%\captionsetup{width=0.8\textwidth}
\centering
\caption{\label{bl}\textbf{The graph of $g$ in red.} The arrows and points in blue  represent  the  orbit of $1-1/n\in J_n$. }
\end{figure}  

\underline{\textit{Step 3:}} We lastly modify  $g$ on $J_n$, $n\geq n_0$ such that the resulting map $h$ satisfies the desired properties. Let us first introduce an auxiliary family of functions $(f_p)_{p\in \mathbb{N}}$. For any $p$  we define  $f_p$ as the tent map $x\mapsto \max(x,1-x)$ on $[1/p,1/2-1/p]\cup [1/2+1/p,1-1/p]$. We extend it into  a $C^r$ smooth interval map in such a way $f_p$ vanishes and admits a tangency of order $r$ at the points $0,1/2$ and $1$. Finally we extend $f_p$ periodically on the whole real axis.  The intervals $[1/p,1/2-1/p]+k$ and $ [1/2+1/p,1-1/p]+k$ for $k\in \mathbb{Z}$ are called the affine branches of $f_p$.   Observe that the $C^r$ norm\footnote{The $C^r$ norm of a $C^r$ smooth interval map $f$ is the maximum over $k=0,...,r$ of the supremum norms $\|f^{(k)}\|_\infty$.} of $f_p$ may be chosen of order $p^{r}$. Then we let $h$ be $x\mapsto \alpha_nf_{n^2}\left((x-1+1/n) 2n^2N_n\right)+g(1-1/n)$ on $J_n$ where $\alpha_n\in \mathbb{R}^+$ and $N_n\in \mathbb{N}$ are chosen such that  
\begin{itemize}
\item for each affine branch $I_n$ in $J_n$,
$$h^{k}(I_n)\subset [0,1/\lambda]\text{ for }k=1,...,r^n-1$$ and $$h^{r^{n}}(I_n)=J_{n+1},$$
\item the $C^r$ norm of $h$ on $J_n$ goes to zero with $n$.
\end{itemize}
The first and second  conditions are respectively  fulfilled whenever 
 \[\lambda^{r^{n}-1}\times \alpha_n(1/2-2/n^2)= 1/2(n+1)^2\]and 
 \[ \max_{k=1,...,r} \|f^{(k)}_{n^2}\|_\infty\times \alpha_n\times (2n^2N_n)^r\sim n^{2r}\times \alpha_n\times (2n^2N_n)^r=1/n.\]
 
\begin{figure}[!ht]
%\vspace{-0,5cm}
\hspace{5cm}
\includegraphics[scale=0.16]{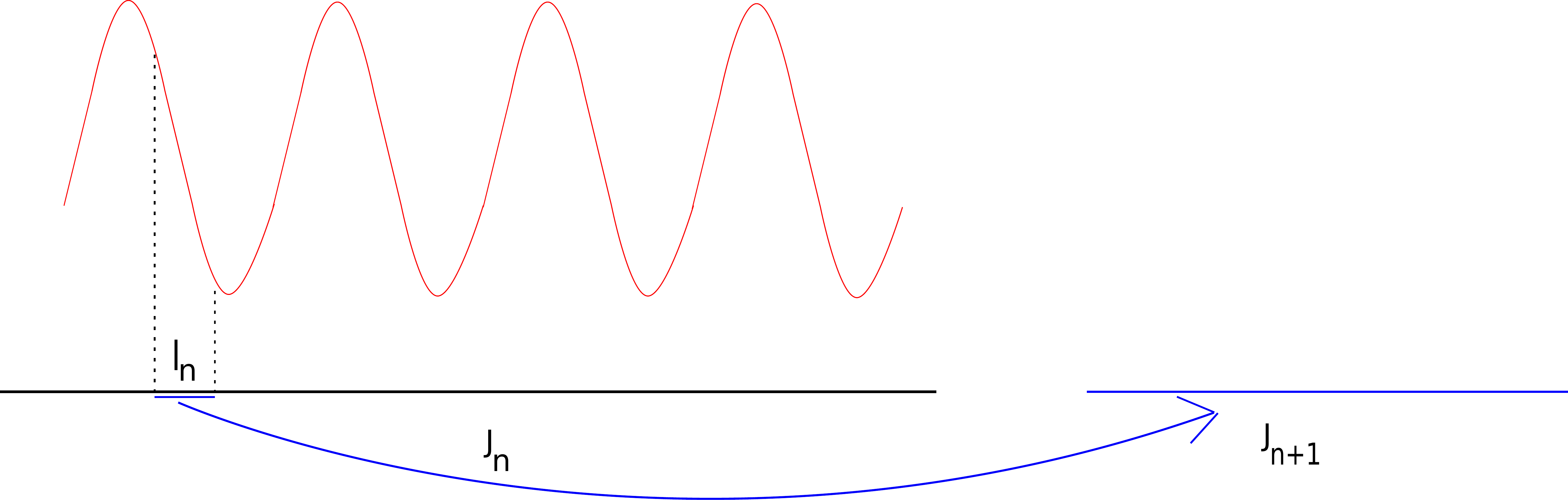}
%\captionsetup{width=0.8\textwidth}
\centering
\caption{\label{bll}\textbf{The graph of $h$ on $J_n$ in red.} The arrows and intervals in blue  represent   the image $J_{n+1}$ of an affine branch $I_n$ under $h^{r^n}$. }
\end{figure}

\underline{\textit{Conclusion:}} Let $E_n=\bigcup_{I_n}I_n$ be the union of affine branches in $J_n$ and let $E=E_{n_0}\cap  h^{-r^{n_0}}E_{n_0+1} \cap h^{-r^{n_0}-r^{n_0+1}}E_{n_0+2}\cap ...$ be the subset of points in $J_{n_0}$ visiting  successively the sets $E_n$, $n\geq n_0$. Clearly $E$ is contained in the basin of the Dirac measure at $0$. To conclude it remains to see  
that $E$ has positive Lebesgue measure and that $\chi(x)\geq \frac{\log \lambda}{r}$ for any $x$ in $E$. The set $E$ is an affine dynamically defined Cantor set where we remove a proportion  of $4/n^2$ at the $n^{th}$ step. Therefore $\Leb(E)=\Leb(E_{n_0})\prod_{n>n_0}(1-4/n^2)>0$. Finally as $\log |h'|$ is equal  on $I_n$ to $ \log(\alpha_n4n^2N_n)\sim \frac{r-1}{r}\log \alpha_n\sim -r^{n-1}(r-1)\log \lambda$,   the Lyapunov exponent at any $x\in E$ is given by
\begin{eqnarray*}
\chi(x)&=& \limsup_p\frac{1}{p}\log |(h^p)'(x)|,\\
&=&  \log \lambda \lim_q \frac{\sum_{q\geq n\geq n_0} \left(r^{n}-r^{n-1}(r-1)\right)}{\sum_{n\geq n_0} r^{n}},\\
&=& \frac{\log \lambda}{r}.
\end{eqnarray*}

\section{Essential range of $x\mapsto pw(x)$}\label{deuse}
We recall here the definition of the essential range of a Borel map with respect to a Borel measure. Finally we relate the set of physical-like measures of a topological system $(M,f)$ with the essential range of $M\ni x\mapsto pw(x)$.\\

We consider two metric spaces $X$ and $Y$ with $Y$ separable. Let $m$ be a Borel measure on $X$ and $\phi:X\rightarrow Y$ be a Borel map.

\begin{definition}
With the above notations the essential range $\overline{\Ima}_m(\phi)$ of $\phi$ with respect to $m$ is the complement 
of $\{y\in Y, \ \exists U \text{ open with }y\in U \text{ and }m(\phi^{-1}U)=0\}$.
\end{definition}

 The set $\overline{\Ima}_m(\phi)$ is a closed subset of $Y$ and for $m$-almost every $x$ the point $\phi(x)$ belongs to $\overline{\Ima}_m(\phi)$. Moreover it is the smallest set satisfying these  properties.
 
 \begin{lemma}Let $(M,f)$ be a topological system. The map  $pw: x\mapsto pw(x)$ from $M$ to $\mathcal{KM}(M)$ is Borel. 
 \end{lemma}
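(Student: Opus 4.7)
The starting point is the standard formula
\[
pw(x)=\bigcap_{N\geq 1}\overline{\{\mu_n^x:n\geq N\}},
\]
which expresses $pw(x)$ as the set of accumulation points of $(\mu_n^x)_n$ in the compact metrizable space $\mathcal{M}(M)$. Since each $x\mapsto f^kx$ is continuous, the map $x\mapsto \mu_n^x=\frac{1}{n}\sum_{k<n}\delta_{f^kx}$ is continuous for every fixed $n$. The plan is to reduce Borel measurability of $pw$ to Borel measurability of these continuous ingredients by checking preimages of a countable subbasis of the Hausdorff topology on $\mathcal{KM}(M)$.

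Because $\mathcal{M}(M)$ is a compact metric space, the Hausdorff metric topology on $\mathcal{KM}(M)$ coincides with the Vietoris topology and is second countable. A subbasis is given by the two families
\[
\mathcal{U}^+=\bigl\{K\in\mathcal{KM}(M):K\subset U\bigr\},\qquad \mathcal{U}^-=\bigl\{K\in\mathcal{KM}(M):K\cap U\neq\emptyset\bigr\},
\]
as $U$ ranges over a countable basis of open sets of $\mathcal{M}(M)$. So it suffices to check that $pw^{-1}(\mathcal{U}^\pm)$ is Borel for each such $U$. I will establish two sequence-level characterizations:
\begin{itemize}
\item[(i)] $pw(x)\cap U\neq\emptyset$ iff there is a basic open set $V$ with $\overline V\subset U$ such that $\mu_n^x\in V$ for infinitely many $n$;
\item[(ii)] $pw(x)\subset U$ iff there is a closed set $W\subset U$ (from a countable family, e.g.\ complements of basic open sets covering $\mathcal{M}(M)\setminus U$) such that $\mu_n^x\in W$ for all but finitely many $n$.
\end{itemize}
The forward implication of (i) follows immediately from the definition of an accumulation point applied to any neighborhood $V\subset U$ of a point of $pw(x)\cap U$; the converse uses compactness of $\mathcal{M}(M)$ to extract an accumulation point in $\overline V\subset U$. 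The converse direction of (ii) is immediate: if $\mu_n^x\in W$ eventually, all accumulation points lie in $W\subset U$. The forward direction uses compactness of $\mathcal{M}(M)\setminus U$: otherwise, for every such $W$, the sequence visits $\mathcal{M}(M)\setminus W$ infinitely often, and a diagonal argument against a decreasing sequence of compact neighborhoods of $\mathcal{M}(M)\setminus U$ produces an accumulation point outside $U$, contradicting $pw(x)\subset U$.

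Once (i) and (ii) are in place, the rest is bookkeeping. By continuity of $x\mapsto\mu_n^x$, each set $\{x:\mu_n^x\in V\}$ is open and each $\{x:\mu_n^x\in W\}$ is closed; hence
\[
pw^{-1}(\mathcal{U}^-)=\bigcup_{V:\overline V\subset U}\ \bigcap_{N}\bigcup_{n\geq N}\{x:\mu_n^x\in V\}
\]
is $F_{\sigma\delta\sigma}$ and
\[
pw^{-1}(\mathcal{U}^+)=\bigcup_{W\subset U}\ \bigcup_{N}\bigcap_{n\geq N}\{x:\mu_n^x\in W\}
\]
is $F_\sigma$, where both outer unions are over countable families. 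Both are Borel, so $pw$ is Borel. The only genuinely delicate step is the compactness/diagonal argument for the forward direction of (ii); everything else is routine from second countability of $\mathcal{KM}(M)$ and continuity of $x\mapsto\mu_n^x$.
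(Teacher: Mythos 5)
Your argument is correct, and it takes a genuinely different route from the paper's. The paper works with closed balls for the Hausdorff metric $\mathfrak{d}^H$ on $\mathcal{KM}(M)$, splitting the condition $\mathfrak{d}^H(pw(x),K)\le\epsilon$ into the two inclusions $pw(x)\subset K_\epsilon$ and $K\subset(pw(x))_\epsilon$; the second is rewritten as $\liminf_p\mathfrak{d}(k_n,\mu_p^x)<\epsilon'$ for all $n$ and rational $\epsilon'>\epsilon$, over a dense sequence $(k_n)$ in $K$, which is Borel because $x\mapsto\mathfrak{d}(k_n,\mu_p^x)$ is continuous. You instead use the Vietoris (hit-and-miss) subbasis of the hyperspace topology and translate each subbasic preimage into infinitely-often / eventually membership of $\mu_n^x$ in sets $V,W$ drawn from countable families, giving an explicit $F_{\sigma\delta\sigma}$ / $F_\sigma$ description. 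Both routes rest on second countability and continuity of $x\mapsto\mu_n^x$, but yours has a concrete advantage: the paper merely asserts that $\{x:pw(x)\subset K_\epsilon\}$ is closed, which is false in general (for $f(x)=x^2$ on $[0,1]$ and $K=\{\delta_0\}$ that set is $[0,1)$); the set is of course still Borel, and your ``miss'' characterization (ii) handles this containment correctly. Finally, the step you flag as delicate is in fact easy to make rigorous without a diagonal argument: $pw(x)=\bigcap_N\overline{\{\mu_n^x:n\ge N\}}$ is a decreasing intersection of compacta contained in the open set $U$, so some single $\overline{\{\mu_n^x:n\ge N\}}$ already lies inside $U$; covering the compact set $\mathcal{M}(M)\setminus U$ by finitely many basic open sets disjoint from that closure then produces the required $W$ from your countable family.
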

\begin{proof}
As the set $\mathcal{KM}(M)$ is  separable, it is enough to show $pw^{-1}(B)$ is a Borel subset of $M$ for any closed ball $B$ of $\mathcal{KM}(M)$. Let $B$ be the closed ball of radius $\epsilon$ centered at $K\in \mathcal{KM}(M)$, i.e. the set of compact subsets $K'$  of $M$ with $K'\subset K_{\epsilon}$ and  $K\subset K'_\epsilon$ where $K_\epsilon$ and $K'_\epsilon$ denote respectively the  closed  $\epsilon$-neighborhoods of $K$ and $K'$. Firstly observe that 
$\{x\in M, \ pw(x)\subset K_{\epsilon}\}$ is closed. Then for   a  fixed sequence $(k_n)_{n\in \mathbb{N}}$ dense in $K$ the following properties are equivalent :
\begin{eqnarray*}
 &   K\subset  (pw(x))_\epsilon,& \\
 \Leftrightarrow & \mathfrak d(k_n, pw(x))\leq \epsilon & \text{ for all }n,\\
 \Leftrightarrow & \liminf_p \mathfrak d (k_n,\mu_x^p)<\epsilon' & \text{ for all }n \text{ and }\mathbb{Q}\ni \epsilon'>\epsilon.
 \end{eqnarray*}
 The fonctions $x\mapsto  \mathfrak d (k_n,\mu_x^p)$ being continous we conclude that  $pw^{-1}(B)$ is a Borel set.
\end{proof}

\begin{lemma}
The set $\mathcal{PL}(m)$ of physical-like measure is the union of all $K\in \overline{\Ima}_m(pw)$.
\end{lemma}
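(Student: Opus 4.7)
The plan is to prove both inclusions for $E := \bigcup_{K \in \overline{\Ima}_m(pw)} K$. Thanks to the preceding lemma, $pw$ is Borel, so the essential range $\overline{\Ima}_m(pw) \subset \mathcal{KM}(M)$ is well defined.

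For the inclusion $E \subset \mathcal{PL}(m)$, I will fix $\mu \in K$ for some $K \in \overline{\Ima}_m(pw)$ and check that $\mu$ lies in every closed set $C \subset \mathcal{M}(M,f)$ containing $pw(x)$ for $m$-almost every $x$. The key observation is that $\mathcal{F}(C) := \{K' \in \mathcal{KM}(M) : K' \subset C\}$ is closed in the Hausdorff topology (its complement consists of those closed $K'$ meeting the open set $\mathcal{M}(M) \setminus C$, which is open in $\mathcal{KM}(M)$) and $pw^{-1}(\mathcal{KM}(M) \setminus \mathcal{F}(C))$ has $m$-measure zero by hypothesis. Since the essential range is the smallest closed set whose complement has $m$-null preimage, $K \in \mathcal{F}(C)$, so $\mu \in K \subset C$. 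Taking the intersection over all such $C$ yields $\mu \in \mathcal{PL}(m)$.

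For the reverse inclusion $\mathcal{PL}(m) \subset E$, it suffices, by minimality of $\mathcal{PL}(m)$, to verify that $E$ is closed in $\mathcal{M}(M,f)$ and that $pw(x) \subset E$ for $m$-almost every $x$. The second property is immediate from the general fact that $pw(x) \in \overline{\Ima}_m(pw)$ for $m$-almost every $x$. For closedness, given $\mu_n \to \mu$ with $\mu_n \in K_n \in \overline{\Ima}_m(pw)$, I will use compactness of $\mathcal{KM}(M)$ (inherited from that of $\mathcal{M}(M)$ via the Hausdorff metric) to extract a subsequence with $K_{n_k} \to K$; closedness of the essential range places $K$ in $\overline{\Ima}_m(pw)$, and the joint convergence $\mu_{n_k} \to \mu$ together with $K_{n_k} \to K$ in Hausdorff metric forces $\mu \in K \subset E$.

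The main thing to be careful about is the Hausdorff-topology bookkeeping on $\mathcal{KM}(M)$, namely that $\mathcal{F}(C)$ is Hausdorff-closed and that Hausdorff-limits absorb point-limits of chosen representatives; once the measurability of $pw$ from the preceding lemma is in hand, the rest is just unpacking the definitions of $\mathcal{PL}(m)$ and of the essential range.
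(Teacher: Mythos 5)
Your proof is correct and follows the same strategy as the paper: both inclusions hinge on the observation that $\{K' \in \mathcal{KM}(M) : K' \subset C\}$ is Hausdorff-closed (equivalently, its complement is open), combined with the defining minimality of the essential range and of $\mathcal{PL}(m)$. The paper phrases the inclusion $E \subset \mathcal{PL}(m)$ as a contradiction argument and proves the compactness of $E$ more tersely, but the underlying ideas are identical to yours.
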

\begin{proof}
Firstly, the set  $\overline{\Ima}_m(pw)$ being  a compact subset of $\mathcal{KM}(M)$, the set   $\bigcup_{K\in  \overline{\Ima}_m(pw)}K$ is a compact subset of $M$. Therefore, from the definitions we get $\mathcal{PL}(m)\subset \bigcup_{K\in  \overline{\Ima}_m(pw)}K$. We argue by contradiction to prove the converse inclusion. Assume there is $K\in \overline{\Ima}_m(pw)$ such that $K$ is not  contained in $\mathcal{PL}(m)$. Then this also holds for any $K'$ close enough to $K$. Therefore there exists an open neighborhood $U$ of $K$ such that $pw^{-1}(U)$ has positive $m$-measure and for all $x$ in this set $pw(x)$ is not contained in $\mathcal{PL}(m)$. It is impossible by definition of  $\mathcal{PL}(m)$.
\end{proof}

\vspace{1,5cm}

\bibliographystyle{amsplain}

\end{large}
\end{document}